\newcommand{\ds}{\displaystyle}
\newtheorem{thm}{Theorem}
\newtheorem{prop}[thm]{Proposition}
\newtheoremstyle{obs}
  {3pt}
  {3pt}
  {}
  {}
  {\bfseries}
  {.}
  {.5em}
  {}
\theoremstyle{obs}
\newtheorem{state}[thm]{Statement}
\def\qed{\ifvmode\removelastskip\fi
{\unskip\nobreak\hfil\penalty50\hbox{}\nobreak\hfil \hbox{\vrule
height1.2ex width1.2ex}\parfillskip=0pt \finalhyphendemerits=0
\par \smallskip}}
\title{$k$-symplectic Pontryagin's Maximum Principle for some families of PDEs}
\author{\textsc{M. Barbero\textendash{}Li\~n\'an}\\
Departamento de Matem\'aticas, Universidad Carlos III de Madrid\\ Avenida de la Universidad 30, 28911 Legan\'es, Madrid, Spain,\\ and 
Instituto de Ciencias Matem\'aticas (CSIC-UAM-UC3M-UCM) 
\\[3mm]
\textsc{M. C. Mu\~noz-Lecanda}
\\
 Departamento de Matem\'atica Aplicada IV,  Universitat Polit\`ecnica de Catalunya,\\
 Edificio C-3, Campus Norte UPC,
   C/ Jordi Girona 1. 08034 Barcelona, Spain}
\begin{document}

\maketitle

\begin{abstract}
An optimal control problem associated with the dynamics of the orientation of a bipolar molecule in the plane can be understood by means of tools in differential geometry. For first time in the literature $k$-symplectic formalism is used
to provide the optimal control problems associated to some families of partial differential equations with a geometric formulation. A parallel between the classic formalism of optimal control theory with ordinary differential 
equations and the one with particular families of partial differential equations is established. This description allows us to state and prove 
Pontryagin's Maximum Principle on $k$-symplectic formalism. We also consider the unified Skinner-Rusk
formalism for optimal control
 problems governed by an implicit partial differential equation. 
\end{abstract}

\section{Introduction}

Boscain et al. in~\cite{Marco} study the
controllability of the equation
\begin{equation}
{\rm i} \dfrac{\partial \Psi(t,\theta)}{\partial t}= 
-\dfrac{\partial^2 \Psi(t,\theta)}{\partial \theta^2}+u_1(t) \cos
\theta \, \Psi(t,\theta)+u_2(t) \sin \theta \, \Psi(t,\theta),
\label{eq:ControlIntro}
\end{equation}
which models the rotation motion of a bipolar rigid molecule confined
to a plane with two control electric fields. See references in the paper for more details about the problem origin and its interests. 

The study of controllability in~\cite{Marco} does not approach the problem of existence
and construction of suitable controls for governing the position of
the molecule. The controls are obtained depending on the purpose to be achieved.  For instance,  if the controls are related with the
energy needed to take the molecule to a particular position or to
track a molecule path in the configuration space, then we might be interested in minimizing the energy consumption. In other words we have associated an optimal control problem to the above control equation. Note that control equation is a particular kind of second-order and control-linear partial
differential equation.

In this paper, a geometric approach is considered to deal with optimal control problems for particular families of
control partial differential equations, similar to the above mentioned
example. 

Following the same lines as in our previous paper on Pontryagin's Maximum Principle~\cite{2009BM},
PMP, we extend the geometric method to optimal control problems with some control partial differential equations. 
As in the classical PMP, to succeed in this extension is necessary to extend the control system in a proper way and to optimize a suitable function, the Pontryagin Hamiltonian, instead of the functional.

Before entering into the details, let us provide some historical background on optimal control theory. 
L. S. Pontryagin talked publicly for the first time about the Maximum
Principle in 1958, in the International Congress of Mathematicians
that was held in Edinburgh, Scotland. This Principle was developed by
a research group on automatic control created by Pontryagin in the
fifties. He was engaged in applied mathematics by his friend A.
Andronov and because scientists in the Steklov Mathematical Institute
were asked to carry out applied research, especially in the field of
aircraft dynamics.

At the same time, in the regular seminars on automatic control in the
Institute of Automatics and Telemechanics, A. Feldbaum introduced
Pontryagin and his colleagues to the time-optimization problem. This
allowed them to study how to find the best way of piloting an
aircraft in order to defeat a zenith fire point in the shortest time
as a time-optimization problem.

Since the equations for modelling the aircraft's problem are
nonlinear and the control of the rear end of the aircraft runs over a
bounded subset, it was necessary to reformulate the calculus of
variations known at that time. Taking into account ideas suggested by
E. J. McShane in~\cite{39McShane}, Pontryagin and his collaborators
managed to state and prove the Maximum Principle, which was published
in Russian in 1961 and translated into English~\cite{P62} the
following year. See~\cite{Boltyanski} for more historical remarks.

Although geometric control theory has been studied since the sixties,
geometric optimal control theory started to be developed in the
nineties~\cite{1997Jurdjevic,S98Free}. However, no geometric
description has been made for optimal control problems governed by
partial differential
 equations~\cite{BookOCPPDESConference,BookOCPPDES1,BookOCPPDES2}. In
 this paper we extend for first time the geometric description of
 optimal control problems to those governed by some classes of
 partial differential equations in order to solve optimal
 control problems that can be found in the physical world, as mentioned above.

With this purpose in mind, the natural geometric background to use is 
$k$-symplectic formalism (G\"unther standard polysymplectic), which
is a generalization of the symplectic formalism in classical
mechanics. The $k$-symplectic formalism makes possible to
geometrically interpret some problems such as the vibrating string
within field theory~\cite{Awane} and other problems~\cite{MunozSilvia}. 
Locally speaking, these problems correspond with
Lagrangian and Hamiltonian functions that do not depend on the base
coordinates, usually denoted by $(t^1,\dots,t^k)$. When a dependence
on the base coordinates exists, the $k$-cosymplectic formalism is
necessary. In other words the $k$-cosymplectic formalism is the
generalization of the cosymplectic formalism used in non-autonomous
mechanics to field theories~\cite{deLeon1,deLeon2}.

However, the control equation~\eqref{eq:ControlIntro} under study is of second order. Hence we need to
extend the $k$-symplectic formalism for optimal control problems developed in this paper to  implicit control differential equations. In this framework we transform the equation~\eqref{eq:ControlIntro}  into a first order one in such a way that we obtain an implicit equation. 

The paper is organized as follows: In Section~\ref{Sec:Setting} we
define the setting to describe  optimal control problems governed by
an explicit first-order partial differential equation using the
$k$-symplectic formalism. A parallel between this formulation and the
geometric description of optimal control problems governed by
ordinary differential equations is considered in order to stress the
similarities and differences between both problems.

 One of the key points to prove Pontryagin's Maximum Principle
consists of extending in a suitable way the control system by adding
new coordinates which contain the information related to the cost
function, the so called \emph{extended system}. In this paper, the
$k$-symplectic formalism for the extended optimal control problems
only works under some particular assumptions on the cost function,
which turn out to include the most typical cost functions in the
literature. To prove Pontryagin's Maximum Principle on $k$-symplectic
formalism in Section~\ref{Sec:kPMP} we define the elementary
perturbation vectors on that formalism.

After this first approach to tackle optimal control problems, we are
going to consider in Section~\ref{Sec:UnifiedKSymplectic}
 the unified Skinner-Rusk formalism for $k$-cosymplectic implicit dynamical systems.
Following the lines of~\cite{2007SRNuestro}, we adapt the above formalism to describe a novel unified formalism for optimal control
problems governed by an implicit partial differential
 equation in Section~\ref{Sec:UnifControlImplicitEDPS}. This
generalized unified formalism will allow us to consider interesting
problems associated with higher order control partial differential
equations, in particular the problem that has motivated our study, see Section~\ref{Sec:example}.
In this last section, we consider the  control partial differential
equation that models the orientation of a bipolar molecule in the
plane
 studied in~\cite{Marco}, as described at the beginning,  when a
control-quadratic cost function is considered. Hence, the use of
$k$-symplectic formalism and all the generalizations we have made in
the previous sections to deal with optimal control problems on
partial differential equations are fully justified.

In the sequel, unless otherwise stated, all the manifolds are real, second countable and $C^\infty$. The maps are
assumed to be also $C^\infty$. Sum over all repeated indices is understood.

\section{$k$-symplectic formalism for optimal control problems governed by an explicit first-order partial differential equation} \label{Sec:Setting}

We first recall briefly the essential definitions and notations in the $k$-symplectic formalism. Let $Q$ be a $n$-dimensional manifold and $\tau_Q\colon TQ \rightarrow Q$ be the natural tangent bundle projection. The \textit{$k$-tangent bundle} or the \textit{bundle of $k^1$-velocities} of $Q$, denoted by $T^1_kQ$, is the Whitney sum of $k$
copies of the tangent bundle $TQ$, that is,
\begin{equation}
 T^1_kQ=TQ\oplus \stackrel{k}{\cdots} \oplus TQ.
\label{eq:T1kQ}
\end{equation}
The elements of $T^1_kQ$ are $k$-tuple $(v_{1_q},\dots,v_{k_q})$ of tangent vectors on $Q$ at the same point $q\in Q$.

The canonical projection $\tau^k_Q\colon T^1_kQ \rightarrow Q$ is defined as follows
\begin{equation*}
 \tau^k_Q(v_{1_q},\dots,v_{k_q})=q.
\end{equation*}
If $(V,(q^i))$ is a local chart on $Q$, then it induces a local chart $(T^1_kV,(q^i,v^i_A))$ on $T^1_kQ$, where
$T^1_kV=(\tau^k_Q)^{-1}(V)$.

A \textit{$k$-vector field} on $Q$ is a section $\mathbf{X}\colon Q \rightarrow T^1_kQ$ of the canonical projection
$\tau^k_Q$. Hence a $k$-vector field $\mathbf{X}$ defines a family of $k$ ordinary vector fields $\{X_1,\dots,X_k\}$ on $Q$ through
the canonical projections $\tau^{k;A}_Q\colon T^1_kQ \rightarrow TQ$ onto the A-th component of $T^1_kQ$, that is,
\begin{equation*}
\tau^{k;A}_Q(v_{1_q},\dots,v_{k_q})=v_{A_q}
\end{equation*}
where $A=1,\dots,k$. Note that $X_A= \tau_Q^{k;A}\circ \mathbf{X}$.

An \textit{integral section of $\mathbf{X}$} is a map $\sigma\colon \mathbb{R}^k\rightarrow Q$, $\mathbf{t}=(t^1,\dots,t^k) \rightarrow \sigma(\mathbf{t})$ such that 
\begin{equation*}{\rm T}^1_k\sigma=\left( \frac{\partial \sigma}{\partial t^1}, \dots, \frac{\partial \sigma}{\partial t^k}\right)_{\sigma(\mathbf{t})}=\mathbf{X}\circ \sigma.
\end{equation*}

We introduce now the notion of control system in the $k$-symplectic formalism. Consider a \textit{control set} $U\subset \mathbb{R}^l$. We need the notion of a $k$-vector field $\mathbf{X}$
defined along the projection $\pi_1\colon Q\times U\rightarrow Q$. Such a $k$-vector field is defined by making the following
diagram commutative:
\begin{equation*}
 \xymatrix{ && T^1_k Q  \ar[d]^{\txt{\small{$\tau^k_Q$}}} \\ J\subseteq \mathbb{R}^k \ar[r]^{\txt{\small{$\phi$}}}\ar@/^/[urr]^{\txt{\small{$({\rm T}^1_k)(\pi_1\circ \phi) \quad$}}}&  Q\times U   \ar[ur]^{\txt{\small{$\mathbf{X}$}}}
 \ar[r]^{\txt{\small{$\pi_1$}}} & Q}
\end{equation*}
where $J$ is a subset of $\mathbb{R}^k$.

An \textit{integral section of a $k$-vector field $\mathbf{X}$ defined along the projection $\pi_1\colon Q\times U\rightarrow Q$}
 is a map $\phi=(\sigma,u) \colon J\subseteq
\mathbb{R}^k \rightarrow Q\times U$ such that
\begin{equation*}
({\rm T}^1_k)(\pi_1\circ \phi)=\mathbf{X}\circ \phi,
\end{equation*}
or in other terms,
\begin{equation*}
 {\rm T}_{(t^1,\dots,t^k)} (\pi_1\circ \phi) \frac{\partial}{\partial t^A}= X_A(\phi(t^1,\dots,t^k))=X_A(\sigma(t^1,\dots,t^k),u(t^1,\dots,t^k)),
\end{equation*}
equivalently $ (\pi_1\circ \phi)_*  \frac{\partial}{\partial t^A}=X_A \circ \phi$
for $A=1,\dots, k$.

Let $F\colon Q\times U\rightarrow \mathbb{R}$ be a regular enough map. Such a function, which is usually called the \textit{cost function} in the
literature, allows us to define the functional
\begin{equation}
{\mathcal F}[ \phi ] =\int_{{\rm Dom}\; \phi} (F\circ \phi) {\rm d}S,
 \label{eq:Functional}
\end{equation}
where ${\rm d}S={\rm d}t^1\wedge \dots \wedge {\rm d}t^k$, i.e. the usual volume form in $\mathbb{R}^k$.

From now on, we assume that ${\rm Dom}\; \phi=I_1\times \dots \times I_k=[t_0^1,t_f^1] \times \dots \times [t_0^k,t_f^k]=\colon \mathbf{I}$.

Before stating the optimal control problem on $k$-symplectic formalism, we remind here the classical optimal control problem 
with cost function $G\colon  Q\times U \rightarrow \mathbb{R}$.

\begin{state}[\textbf{Optimal control problem, OCP}] Given $(Q,U,X,G,I)$. Find a curve $(\gamma,u)\colon I\subset \mathbb{R} \rightarrow Q\times U$ joining the points $x_0$ and
$x_f$ in $Q$ such that
\begin{enumerate}
 \item it is an integral curve of the vector field $X$ defined along the projection $\pi_1\colon Q\times U \rightarrow Q$, i.e.
$\dot{\gamma}(t)=X(\gamma(t),u(t))$;
\item it minimizes the functional $\int_I G(\tilde{\gamma}(t),\tilde{u}(t)){\rm d}t$ among all the integral curves $(\tilde{\gamma}, \tilde{u})$ of $X$ on $Q\times U$ joining
$x_0$ and $x_f$.
\end{enumerate}  \label{State:OCP}
\end{state}

\begin{state}[\textbf{$k$-symplectic optimal control problem, $k$-OCP}] Given $(Q,U,\mathbf{X},F,\mathbf{I})$. Find a map $\phi=(\sigma,u)\colon\mathbf{I}= I_1\times \dots \times I_k \subset \mathbb{R}^k \rightarrow Q\times U$
passing  through the points $q_0$ and
$q_f$ in $Q$ such that
\begin{enumerate}
 \item it is an integral section of the $k$-vector field $\mathbf{X}=(X_1,\dots,X_k)$ defined along the
projection $\pi_1\colon Q\times U \rightarrow Q$, i.e.
\begin{equation} {\rm T}^1_k(\pi_1\circ \phi)=\mathbf{X} \circ \phi,\quad {\rm i. e.} \quad \frac{\partial \sigma^i}{\partial t^A}(\mathbf{t})=X^i_A(\phi(\mathbf{t}))=
X^i_A(\sigma(\mathbf{t}),u(\mathbf{t})), \label{Eq:KSymplOCPeq}
 \end{equation}
where $\mathbf{t}=(t^1,\dots,t^k)\in I_1\times \dots \times I_k$;
\item it minimizes the functional $\int_{I_1\times \dots \times I_k} F(\tilde{\phi}(\mathbf{t})){\rm d}^k\mathbf{t}$
among all the integral sections $\tilde{\phi}$
of $\mathbf{X}$ on $Q\times U$ passing through $q_0$ and $q_f$, where ${\rm d}^k\mathbf{t}={\rm d}t^1\wedge \dots \wedge {\rm d}t^k$ .
\end{enumerate} \label{State:kOCP}
\end{state}

Let us compare the frameworks in the traditional optimal control problems and in the k-symplectic optimal control problems.

\begin{minipage}[t]{0.45\textwidth}
\begin{center}
Classical OCP for ODE
\begin{equation*}\xymatrix{ & TQ \ar[d]^{\txt{\small{$\tau_Q$}}} \\
Q\times U \ar[r]^{\txt{\small{$\pi_1$}}} \ar[ru]^{\txt{\small{$X$}}} & Q  \\
I\subset \mathbb{R} \ar[u]^{\txt{\small{$(\gamma,u)$}}} \ar[ru]^{\txt{\small{$\gamma$}}}  &
}\end{equation*}
\end{center}
\end{minipage}
\hfill
\begin{minipage}[t]{0.45\textwidth}
\begin{center}
$k$-symplectic OCP
\begin{equation*}\xymatrix{ &T^1_kQ \ar[d]^{\txt{\small{$\tau^1_Q$}}} \\
 Q\times U \ar[r]^{\txt{\small{$\pi_1$}}}
\ar[ru]^{\txt{\small{$\mathbf{X}$}}} & Q  \\
\mathbf{I}=I_1\times \dots \times I_k\subset \mathbb{R}^k \ar[u]^{\txt{\small{$\phi=(\sigma,u)$}}} \ar[ru]^{\txt{\small{$\sigma$}}} &
}\end{equation*}
\end{center} \end{minipage}

Observe that the classical OCP has associated a problem of explicit ordinary differential equations, whereas the equations in the $k$-symplectic optimal control problem are explicit partial differential equations.

Since late fifties the most efficient tool to solve optimal control problem is Pontryagin's Maximum Principle, which provides us with necessary conditions for optimality~\cite{P62}. One of the key points to prove that Principle for classical optimal control
theory consists of extending the control system in a suitable way. To be more precice, $Q$ is extended to the manifold
$\widehat{Q}=\mathbb{R}\times Q$ with local coordinates $\widehat{x}=(x^0,x^i)$ and the corresponding extended vector field is given by
\begin{equation*}
 \widehat{X}(\widehat{x},u)=G(x,u)\frac{\partial}{\partial x^0}_{(\widehat{x},u)}+X(x,u).
\end{equation*}
Note that the system of ordinary differential equations which determines the integral curves of $\widehat{X}$ 
can be decoupled in the following sense: we first integrate $\dfrac{{\rm d} x^i}{{\rm d}t}=X^i(x,u)$ and then we have
\begin{equation*}
 x^0(t)=\int_{t_0}^t G(\gamma(s),u(s)){\rm d}s,
\end{equation*}
for any $t\in I=[t_0,t_1]$. 

Unfortunately, in order to extend coherently the optimal control problem on $k$-symplectic formalism we need some extra assumptions on the cost function. 

\textbf{Assumption 1.} The Lie derivative of the cost function with respect to each $X_A$ is zero, that is, ${\rm L}_{X_A} F=0$.

\textbf{Assumption 2.} The control functions $u\colon \mathbf{I} \rightarrow U$ are locally constants. 

We can justify these assumptions as follows. In a first try to extend the control system we will add $k$ new 
variables $(q^{0_1},\dots,q^{0_k})$ such that for every $A=1,\dots,k$
\begin{eqnarray}
 \dfrac{\partial q^{0_A}}{\partial t^A}&=& F, \label{eq:Q0AtA} \\
\dfrac{\partial q^{0_A}}{\partial t^B}&=& 0,\quad \mbox{ for } B\neq A.  \label{eq:Q0AtB}
\end{eqnarray}
Once we have an integral section of $\mathbf{X}$, we integrate~\eqref{eq:Q0AtA} and obtain
\begin{equation*}
 q^{0_A}[t^1,\dots,\widehat{t^A},\dots,t^k](t^A)=\int^{t^A}_{t_0^A} F(q^1,\dots,q^n,u^1, \dots, u^l)(t^1,\dots,s,\dots,t^k)\, {\rm d}s.
\end{equation*}
The equation~\eqref{eq:Q0AtB} is satisfied by $q^{0_A}$ for $B\neq A$ if 
\begin{equation*}
 \dfrac{\partial q^{0_A}}{\partial t^B}=\int^{t^A}_{t^A_0} 
\left(\dfrac{\partial F}{\partial q^i}\, \dfrac{\partial q^i}{\partial t^A}
+ \dfrac{\partial F}{\partial u^a}\, \dfrac{\partial u^a}{\partial t^A}
  \right)(t^1,\dots,s,\dots,t^k) {\rm d}s=0.
\end{equation*}
Note that if both assumptions are satisfied the equations will be immediately satisfied. Thus these assumptions are necessary to 
guarantee the compatibility of the system of partial differential equations when we extend the control system.

Moreover, having in mind~\cite{Grasse} Assumption 2 is reasonable when dealing with control systems. These two assumptions include the most typical cost functions considered in optimal control problems such as
control-quadratic, constant function 1 (that is, time optimal), etc. Hence these assumptions do not impose great restrictions
according to the literature.

Under the above assumptions, let us consider now the extended $k$-symplectic optimal control problem. In order to preserve
the same philosophy as in classical control theory, we will have to add $k$ new coordinates $(q^{0_B})_{\{B=1,\dots,k\}}$. Then the extended manifold in $k$-symplectic formalism is given by $\widehat{Q}=\mathbb{R}^k\times Q$.
If $\mathbf{X}=(X_1,\dots,X_k)$ is the $k$-vector field on $Q$, then the \textit{extended $k$-vector field
$\widehat{\mathbf{X}}$ on $\widehat{Q}$} is given by $(\widehat{X}_1,\dots,\widehat{X}_k)$ where
\begin{equation}
 \widehat{X}_A=F \delta^B_A\, \ds{\frac{\partial}{\partial q^{0_B}}}+X_A=F\, \ds{\frac{\partial}{\partial q^{0_A}}}+X_A,\quad {\rm for \; every} \quad A=1,\dots,k, \label{eq:hatXA}
\end{equation}
where $\delta^B_A$ is the Kronecker's delta and $F$ is the cost function.

As in classical optimal control theory, we can first integrate $\dfrac{\partial q^i}{\partial t^A}=X^i_A(q,u)$ and obtain $\phi=(\sigma,u)\colon \mathbf{I} \rightarrow Q\times U$. Then,  
\begin{equation*}
q^{0_A}(\mathbf{t})= \int^{t^A}_{t^A_0} F(\sigma(\mathbf{t}),u(\mathbf{t})){\rm d}t^A, \quad \mbox{for every } A=1,\dots, k.
\end{equation*}
Because of~\eqref{eq:Q0AtB} $q^{0_A}$ is constant when we fix $t^A$. On the other hand, due to Assumption 1 and~\ref{eq:Q0AtA} $q^{0_A}$ is constant along integral curves of $X_A$ for every $A=1,\dots, k$.

\begin{state}[\textbf{Extended $k$-symplectic optimal control problem}] Given $(\widehat{Q},U,\widehat{\mathbf{X}},F,\mathbf{I})$. Find a map $\widehat{\phi}=(\widehat{\sigma},u)
\colon \mathbf{I}\subset \mathbb{R}^k \rightarrow \widehat{Q}\times U$
passing through the points $(\mathbf{0},q_0)$ in $\widehat{Q}$ and
$q_f$ in $Q$ such that
\begin{enumerate}
 \item it is an integral section of the $k$-vector field $\widehat{\mathbf{X}}=(\widehat{X}_1,\dots,\widehat{X}_k)$
defined along the
projection $\widehat{\pi}_1\colon \widehat{Q}\times U \rightarrow \widehat{Q}$, i.e. locally
\begin{equation*} {\rm T}^1_k(\widehat{\pi}_1 \circ \widehat{\phi})=  \widehat{\mathbf{X}} \circ \widehat{\phi}, \quad {\rm i.e.} \quad \frac{\partial \sigma^{0_B}}{\partial t^A}(\mathbf{t})=F(\phi(\mathbf{t}))\delta^B_A,
\quad \frac{\partial \sigma^i}{\partial t^A}(\mathbf{t})=X^i_A(\phi(\mathbf{t})),
 \end{equation*}
where $\mathbf{t}=(t^1,\dots,t^k)\in I_1\times \dots \times I_k$, $\delta^B_A$ is the Kronecker's delta, for every $A,B=1,\dots,k$;
$i=1,\dots,n$;
\item it minimizes each functional \begin{equation}\label{eq:FA}
 {\mathcal F}_A[\phi] (\mathbf{t})=\int^{t^A}_{t^A_0} F(\phi(t^1,\dots, \stackrel{{\rm Ath}}{s}, \dots, t^k)) \, {\rm d}s,
\end{equation} for $A=1,\dots, k$,
among all the integrals sections $\widehat{\phi}$
of $\widehat{\mathbf{X}}$ on $\widehat{Q}\times U$ passing through $q_0$ and $q_f$ such that $\phi=\pi_{Q\times U}\circ \widehat{\phi}$ for $\pi_{Q\times U}\colon \widehat{Q}\times U \rightarrow Q\times U$.
\end{enumerate}\label{State:HatkOCP}
\end{state}

\remark \label{RemNoEquivalence} Note that if the projection of an integral section $\widehat{\phi}\colon I_1\times\dots \times I_k \rightarrow \widehat{Q}\times U$ of
$\widehat{\mathbf{X}}$ to $\phi$ on $Q\times U$ minimizes each functional in~\eqref{eq:FA}, then the projection of the integral
section $\widehat{\phi}$ to $\phi$ on $Q\times U$ minimizes the functional
\begin{equation}
{\mathcal F}[\phi] =\int_{I_1\times\dots \times I_k} (F\circ \phi) {\rm d}^k\mathbf{t} \label{eq:F}
\end{equation}
since the order of integration does not matter.

Hence, in contrast with classical optimal control theory, in $k$-symplectic formalism the extended optimal control problem
and the optimal control problem are not equivalent. However, solutions to the extended problem in Statement~\ref{State:HatkOCP} are also solutions to the original  $k$-symplectic optimal control problem in Statement~\ref{State:kOCP}.  As we will see
later on,  the adapted version of Pontryagin's Maximum Principle in
$k$-symplectic formalism provides us with necessary conditions for optimality of the $k$ functionals in~\eqref{eq:FA} for those cost functions satisfying assumptions 1 and 2.

As mentioned above the trajectories that minimize~\eqref{eq:FA} also minimize~\eqref{eq:F}, but not necessarily in the other way around. Remember that to minimize a multiple integral does not imply that every simple integral involved is minimized. Thus, the necessary conditions for optimality described in Section~\ref{Sec:kPMP} in the $k$-symplectic version of Pontryagin's Maximum Principle are more restrictive than the traditional necessary conditions for optimality in~\cite{2009BM,P62}.

The elements of extended optimal control problems in classical formalism and $k$-symplectic formalism are
summarized in the following diagrams:

\begin{minipage}[t]{0.45\textwidth}
\begin{center}
Classical extended OCP for ODE
\begin{equation*}\xymatrix{   & T\widehat{Q} \ar[d]^{\txt{\small{$\tau_{\widehat{Q}}$}}} \ar[r]^{\txt{\small{${\rm T}\pi_2$}}}
& TQ  \ar[d]^{\txt{\small{$\tau_{Q}$}}}
\\
\widehat{Q}\times U \ar[r]^{\txt{\small{$\widehat{\pi}_1$}}} \ar[ru]^{\txt{\small{$\widehat{X}$}}} & \widehat{Q}
\ar[r]^{\txt{\small{$\pi_2$}}} & Q  \\
I\subset \mathbb{R} \ar[u]^{\txt{\small{$(\widehat{\gamma},u)$}}} \ar[ru]^{\txt{\small{$\widehat{\gamma}$}}}
\ar[rru]^{\txt{\small{$\gamma$}}} & &
}\end{equation*}
\end{center}
\end{minipage}
\hfill{}
\begin{minipage}[t]{0.45\textwidth}
\begin{center}
$k$-symplectic extended OCP
\begin{equation*}\xymatrix{
&T^1_k\widehat{Q} \ar[d]^{\txt{\small{$\tau^1_{\widehat{Q}}$}}} \ar[r]^{\txt{\small{${\rm T}^1_k\pi_2$}}}
& T^1_k Q  \ar[d]^{\txt{\small{$\tau^1_Q$}}} \\  \widehat{Q}\times U \ar[r]^{\txt{\small{$\widehat{\pi}_1$}}}
\ar[ru]^{\txt{\small{$\widehat{\mathbf{X}}$}}} & \widehat{Q}  \ar[r]^{\txt{\small{$\pi_2$}}} & Q \\
I_1\times \dots \times I_k\subset \mathbb{R}^k \ar[u]^{\txt{\small{$\widehat{\phi}=(\widehat{\sigma},u)$}}}
\ar[ru]^{\txt{\small{$\widehat{\sigma}$}}}  \ar[rru]^{\txt{\small{$\sigma$}}} & &
}\end{equation*}
\end{center}
\end{minipage}

In order to state Pontryagin's Maximum Principle we need a Hamiltonian problem associated with each of the extended optimal control problems.
It is important to remark here that this Hamiltonian problem is not equivalent to the optimal control problems in the classical formalism neither in the $k$-symplectic formalism. 

%

The Hamiltonian for the extended optimal control problem in classical theory is given by
$H\colon T^*\widehat{Q}\times U \rightarrow \mathbb{R}$,
\begin{equation*}H(\widehat{p},u)=\langle \widehat{p}, \widehat{X}(\widehat{x},u)\rangle=p_0 G(x,u)+p_iX^i(x,u).
\end{equation*}
For each control $u$, the Hamiltonian vector field $\widehat{X}_H^{\{u\}}=\widehat{X}_H(\cdot,u)$ satisfies the following
Hamilton's equation
\begin{equation*}
 {\rm i}_{\widehat{X}_H^{\{u\}}} \omega ={\rm d}H^{\{u\}},
\end{equation*}
where $\omega$ is the canonical symplectic structure on $T^*\widehat{Q}$. Locally $\omega={\rm d}x^0\wedge {\rm d}p_0 + {\rm d}x^i \wedge  {\rm d}p_i$ in natural local coordinates $(x^0,x^i,p_0,p_i)$ in $T^*\widehat{Q}$.

For the extended $k$-symplectic optimal control problem we consider $k$ Hamiltonian functions
$H_A\colon (T^1_k)^*\widehat{Q}\times U
\rightarrow \mathbb{R}$ defined as follows
\begin{eqnarray}H_A(\widehat{\mathbf{p}},u)&=&\langle \widehat{p}^A, \widehat{X}_A(\widehat{q},u)\rangle=
\sum_{B=1}^k p^A_{0_B} F(q,u) \delta^B_A +\sum_{j=1}^n p^A_j X_A^j(q,u) \nonumber \\
&=& p^A_{0_A}F(q,u)+  \sum_{j=1}^n p^A_j X_A^j(q,u), \label{eq:HA}
\end{eqnarray}
in natural local coordinates $(q^{0_1},\dots, q^{0_k},q^1,\dots,q^n,(p^A_{0_1},\dots, p^A_{0_k},p^A_1,\dots,p^A_n)_{A=1,\dots,k})$
for $(T^1_k)^*\widehat{Q}$.

For each control $u$, the Hamiltonian $k$-vector field $\widehat{\mathbf{X}}^{*^{\{u\}}}=\left(\widehat{X}_{1}^{*^{\{u\}}}, \dots, \widehat{X}_{k}^{*^{\{u\}}}\right)$ must satisfy the following family of equations
\begin{equation}
 {\rm i}_{\widehat{X}_{A}^{*^{\{u\}}}}\omega^A={\rm d}H_A^{\{u\}} \quad \mbox{for every } A=1,\dots, k.\label{eq:iXAHA}
\end{equation}
The canonical $k$-symplectic structure on $(T^1_k)^*\widehat{Q}$ is given by $(\omega_1,\dots, \omega_k)$ where
$\omega_A=(\pi^A)^*\omega$, $\pi^A\colon (T^1_k)^*\widehat{Q} \rightarrow T^*\widehat{Q}$ is the projection onto the Ath-copy and $\omega$ is the
canonical symplectic structure on $T^*\widehat{Q}$. Locally $\omega_A={\rm d} q^{0_j}\wedge {\rm d} p^A_{0_j}+{\rm d} q^i \wedge {\rm d} p^A_i$.

If for each control the Hamiltonian $k$-vector field $\widehat{\mathbf{X}}^{*^{\{u\}}}$ is solution to~\eqref{eq:iXAHA}, then
it is solution to the following Hamilton-De Donder-Weyl equations
\begin{equation}
 \sum_{A=1}^k{\rm i}_{\widehat{X}_{A}^{*^{\{u\}}}}\omega^A=\sum_{A=1}^k{\rm d}H_A^{\{u\}}={\rm d}\left(\sum_{A=1}^kH_A^{\{u\}}\right)={\rm d}\mathbf{H}, \label{eq:SumiXAHA}
\end{equation}
associated with the Hamiltonian $\mathbf{H} \colon (T^1_k)^*\widehat{Q}\times U
\rightarrow \mathbb{R}$ given by
 \begin{equation*}\mathbf{H}(\widehat{\mathbf{p}},u)=\sum_{A=1}^kH_A(\widehat{\mathbf{p}},u)=
\sum_{A=1}^k \langle \widehat{p}^A, \widehat{X}_A(\widehat{q},u)\rangle,
\end{equation*}
where $\widehat{\mathbf{p}}\in (T^1_k)^*_{\widehat{q}} \widehat{Q}$.
By the superposition principle all the solutions of~\eqref{eq:iXAHA} are solutions to~\eqref{eq:SumiXAHA} because both systems are linear in the momenta. However,~\eqref{eq:SumiXAHA} has more solutions apart from the ones coming from~\eqref{eq:iXAHA}.
In fact, for every $A\in \{1,\dots, k\}$ the Ath vector field $\widehat{X}_A^*$ of the Hamiltonian $k$-vector field $\widehat{\mathbf{X}}^*=\left( \widehat{X}_1^*,\dots, \widehat{X}_k^* \right)$
is locally expressed as follows
\begin{equation*}
 \widehat{X}_A^*=(Y_A)^{0_B} \, \ds{\frac{\partial}{\partial q^{0_B}}+ (Y_A)^i\, \frac{\partial}{\partial q^i}
+(Y_A)^C_{0_B} \, \frac{\partial}{\partial p^C_{0_B}}+(Y_A)^C_j \, \frac{\partial}{\partial p^C_j}}.
\end{equation*}
From~\eqref{eq:iXAHA} we obtain
\begin{equation}
 \begin{array}{lcllcl}
  (Y_A)^{0_B}&=&X^{0_B}_A=F \delta^B_A, & (Y_A)^A_{0_B}&=&0,\\
(Y_A)^i&=&X^i_A, & (Y_A)^A_i&=&  \ds{-p^A_{0_A} \, \frac{\partial F}{\partial q^i}-p^A_{j} \, \frac{\partial X^{j}_A}{\partial q^i}}, 
 \end{array} \label{eq:LocaliXAHA}
\end{equation}
for every $A=1,\dots, k$. Note that the Hamiltonian $k$-vector field  $\widehat{\mathbf{X}}^*=\left(  \widehat{X}_1^*,\dots,  \widehat{X}_k^* \right)$  is
not completely determined because the following functions
\begin{equation}
 (Y_A)^C_{0_B}, \quad (Y_A)^C_j \label{eq:wishToBe0}
\end{equation}
remain undetermined for $C\neq A$ and for every $A=1,\dots, k$. 

On the other hand, from~\eqref{eq:SumiXAHA}  the Hamiltonian $k$-vector field $\widehat{\mathbf{X}}^*=\left( \widehat{X}_1^*,\dots, \widehat{X}_k^* \right)$ must satisfy 
\begin{align}
  (Y_A)^{0_B}&=X^{0_B}_A=F \delta^B_A, \quad  (Y_A)^A_{0_B}=0,\quad (Y_A)^i=X^i_A,\nonumber\\
  \sum_{A=1}^k (Y_A)^A_i&=  \sum_{A=1}^k\left(\ds{-p^A_{0_A} \, \frac{\partial F}{\partial q^i}-p^A_{j} \, \frac{\partial X^{j}_A}{\partial q^i}}\right).  \label{eq:LocalSumiXAHA}
\end{align}

By comparing ~\eqref{eq:LocaliXAHA} and~\eqref{eq:LocalSumiXAHA} it is clear that all the solutions to~\eqref{eq:iXAHA} are also solution to~\eqref{eq:SumiXAHA}, but not in the other way around. Neither the Hamiltonian $k$-vector field $\widehat{\mathbf{X}}^{*^{\{u\}}}$ solution to~\eqref{eq:iXAHA}, nor the Hamiltonian $k$-vector field $\widehat{\mathbf{X}}^{*^{\{u\}}}$ solution to the Hamilton-De Donder-Weyl equations are fully determined. For the first one, the functions in~\eqref{eq:wishToBe0} remain undetermined. For the second one, the functions in~\eqref{eq:wishToBe0} remain undetermined and maybe some of the $(Y_A)^A_i$ involved in~\eqref{eq:LocalSumiXAHA}.

However, we can reduce in an intrinsic way the number of functions that remain undetermined in the above mentioned Hamiltonian $k$-vector fields $\widehat{\mathbf{X}}^{*^{\{u\}}}$ in such a way that the  Hamiltonian $k$-vector field $\widehat{\mathbf{X}}^{*^{\{u\}}}$ solution to~\eqref{eq:iXAHA} is fully determined. Note that 
$(T^1_k)^*\widehat{Q}=(T^1_k)^*(\mathbb{R}^k\times Q)\simeq (T^1_k)^*\mathbb{R}^k \times (T^1_k)^* Q$, which has 
two natural projections ${\rm pr}_1$ and ${\rm pr}_2$ from $(T^1_k)^*\widehat{Q} $ to $(T^1_k)^*\mathbb{R}^k$ and $(T^1_k)^* Q$, 
respectively. Consider now the canonical projections $\pi^{k;C}_Q \colon (T^1_k)^*Q \rightarrow T^*Q$ and 
$\pi^{k;C}_{\mathbb{R}^k} \colon (T^1_k)^*\mathbb{R}^k \rightarrow T^*\mathbb{R}^k$
to the $C$th component of $(T^1_k)^*Q$ and $(T^1_k)^*\mathbb{R}^k$, respectively.

The conditions
\begin{eqnarray}
{\rm T}\left(\pi_{\mathbb{R}^k}^{k;C} \circ {\rm pr}_1 \right) \left( \widehat{X}^*_A\right)&=&0, \label{eq:0Condition1}\\
{\rm T}\left(\pi_Q^{k;C} \circ {\rm pr}_2 \right) \left( \widehat{X}^*_A\right)&=&0,
\label{eq:0Condition2} \end{eqnarray} for every $C\neq A$ imply locally that $(Y_A)^C_{0B}=0$  and $(Y_A)^C_j=0$ for $C\neq A$ and for every $C\neq A$.

Under conditions~\eqref{eq:0Condition1},~\eqref{eq:0Condition2}, given an initial condition $\widehat{\mathbf{\beta}}_0$ in $(T^1_k)^*\widehat{Q}$ there exists a unique integral section $\widehat{\mathbf{\beta}}\colon I_1 \times \dots \times I_k \rightarrow (T^1_k)^*\widehat{Q}$ of the Hamiltonian $k$-vector field solution to~\eqref{eq:iXAHA}. It is clear from the local equations~\eqref{eq:LocaliXAHA} that once  $A$ is fixed,  $p^B$ does not appear in the set of equations in~\eqref{eq:iXAHA} associated with $A$ and only $p^A$'s appear.
%
%
%

\subsection{Elementary perturbation vectors and Pontryagin's Maximum Principle on $k$-symplectic formalism} \label{Sec:kPMP}

Now let us introduce the notion of elementary perturbation in $k$-symplectic formalism that allows us to define later
the $k$-symplectic tangent perturbation cones. These elements are essential to prove the $k$-symplectic Pontryagin's
Maximum Principle, Theorem~\ref{thm:kPMP}. 

First fix a surface $(\widehat{\sigma},u)\colon I_1\times \dots \times I_k \rightarrow
\widehat{Q}\times U$. Let $\pi_A$ be  a 3-tuple $\{r_A,l_A,u_A\}$ where $r_A$, $l_A\in\mathbb{R}$ and $u_A\in U\subseteq \mathbb{R}^l$. The \textbf{Ath-elementary perturbation of  the control $u$} is defined as follows
\begin{equation}
 u[\pi_A^s](t^1,\dots,t^k)=\left\{ \begin{array}{ll} u_A, & t^A \in [r_A-l_As,r_A], \\
u(t^1,\dots,t^k), & \rm{elsewhere}.  \end{array} \right.
\end{equation}

Associated to this control $ u[\pi_A^s]$, the mapping $\widehat{\sigma}[\pi_A^s]\colon I_1\times \dots \times I_k \rightarrow
\widehat{Q}$ is the integral section of the $k$-vector field $\widehat{\mathbf{X}}^{\{ u[\pi_A^s]\}}$ with initial
condition $(t_0^1,\dots,t_0^k,\widehat{\sigma}(t_0^1,\dots,t_0^k))$.

Given $\epsilon>0$, define the map
\begin{equation*}
\begin{array}{rcl}
 \varphi_{\pi_A}\colon I_1\times \dots \times I_k \times [0,\epsilon] &\rightarrow &\widehat{Q}\\
(\mathbf{t},s) &\longmapsto & \varphi_{\pi_A}(\mathbf{t},s)=\widehat{\sigma}[\pi_A^s](\mathbf{t}).
\end{array}
\end{equation*}
For every $\mathbf{t}\in I_1\times \dots \times I_k $, $\varphi_{\pi_A}^{\mathbf{t}}\colon [0,\epsilon] \rightarrow \widehat{Q}$
is given by $\varphi_{\pi_A}^{\mathbf{t}}(s)=\varphi_{\pi_A}(\mathbf{t},s)$. The curve $\varphi_{\pi_A}^{\mathbf{t}}$
depends continuously on $s$ and on $\pi_A=\{r_A,l_A,u_A\}$.

From $\widehat{\sigma}[\pi_A^s]$ we can define a curve as follows
\begin{equation*}
\begin{array}{rcl}
 \widehat{\sigma}[\pi_A^s](t^1,\dots, \hat{t^A}, \dots, t^k)\colon I_A &\longrightarrow & \widehat{Q}\\
t^A & \longmapsto & \widehat{\sigma}[\pi_A^s](t^1,\dots, \hat{t^A}, \dots, t^k)(t^A)=
\widehat{\sigma}[\pi_A^s](t^1,\dots, t^A, \dots, t^k).
\end{array}
\end{equation*}
This curve is an integral curve of $\widehat{X}_A^{\{u[\pi_A^s]\}}$ with initial condition
$(t_0^A, \widehat{\sigma}(t_0^1,\dots,t_0^k))$.

\begin{prop} Let $r_A\in I_A$. If $u[\pi_A^s]$ is an elementary perturbation of $u$ specified by the data
$\pi_A=\{r_A,l_A,u_A\}$, then the curve $\varphi_{\pi_A}^{\mathbf{t}}$ is differentiable at $s=0$ and its tangent vector is
\begin{equation}[\widehat{X}_A(\widehat{\sigma}(t^1,\dots, r_A,\dots, t^k),u_A)-\widehat{X}_A(\widehat{\sigma}(t^1,\dots, r_A,\dots, t^k),
u(t^1,\dots, r_A,\dots, t^k))]l_A =\colon \widehat{v}[\pi_A] \label{Eq:PertVectorA} \end{equation}for fixed $(t^1,\dots, r_A,\dots, t^k)\in I_1\times \dots \times I_k$.
\end{prop}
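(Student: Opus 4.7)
The plan is to reduce the statement to the classical one-dimensional needle-variation computation applied in the $A$th time direction. I fix the spectator coordinates $(t^1,\dots,t^{A-1},t^{A+1},\dots,t^k)$ and restrict both $\widehat{\sigma}$ and $\widehat{\sigma}[\pi_A^s]$ to the $A$th coordinate curve. By the defining property of integral sections of $\widehat{\mathbf{X}}^{\{u\}}$ and $\widehat{\mathbf{X}}^{\{u[\pi_A^s]\}}$, the resulting curves are integral curves of the ordinary vector fields $\widehat{X}_A^{\{u\}}$ and $\widehat{X}_A^{\{u[\pi_A^s]\}}$, respectively. Outside the slab $\{t^A\in[r_A-l_As,r_A]\}$ the two controls coincide, so the compatibility of the extended $k$-symplectic system (guaranteed by Assumptions 1 and 2) together with uniqueness of integral sections sharing the initial datum at $(t_0^1,\dots,t_0^k)$ yields
$$\widehat{\sigma}[\pi_A^s](t^1,\dots,t^A,\dots,t^k)=\widehat{\sigma}(t^1,\dots,t^A,\dots,t^k)\qquad\text{for every }t^A\le r_A-l_As,$$
for all sufficiently small $s>0$.

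Next, I would apply the fundamental theorem of calculus to both curves on the interval $[r_A-l_As,r_A]$. Since they agree at $t^A=r_A-l_As$, subtracting gives
$$\widehat{\sigma}[\pi_A^s](t^1,\dots,r_A,\dots,t^k)-\widehat{\sigma}(t^1,\dots,r_A,\dots,t^k)=\int_{r_A-l_As}^{r_A}\bigl[\widehat{X}_A(\widehat{\sigma}[\pi_A^s](t^1,\dots,\tau,\dots,t^k),u_A)-\widehat{X}_A(\widehat{\sigma}(t^1,\dots,\tau,\dots,t^k),u(t^1,\dots,\tau,\dots,t^k))\bigr]d\tau.$$
Dividing by $s$ and letting $s\downarrow 0$, the integration interval has length $l_As$ and the bracketed integrand is continuous at $(\tau,s)=(r_A,0)$, so the quotient tends to
$$l_A\bigl[\widehat{X}_A(\widehat{\sigma}(t^1,\dots,r_A,\dots,t^k),u_A)-\widehat{X}_A(\widehat{\sigma}(t^1,\dots,r_A,\dots,t^k),u(t^1,\dots,r_A,\dots,t^k))\bigr]=\widehat{v}[\pi_A],$$
which simultaneously establishes differentiability of $\varphi_{\pi_A}^{\mathbf{t}}$ at $s=0$ and identifies the tangent vector with the expression in \eqref{Eq:PertVectorA}.

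The main technical point is justifying the passage to the limit inside the integral, which amounts to joint continuity of $(\tau,s)\mapsto \widehat{X}_A(\widehat{\sigma}[\pi_A^s](t^1,\dots,\tau,\dots,t^k),u_A)$ at $(r_A,0)$. This follows from continuous dependence of the flow of the smooth vector field $\widehat{X}_A^{\{u_A\}}$ on its initial condition (shifted here by the $O(s)$ quantity $l_As$), together with continuity of $\tau\mapsto \widehat{X}_A(\widehat{\sigma}(t^1,\dots,\tau,\dots,t^k),u(t^1,\dots,\tau,\dots,t^k))$ at $\tau=r_A$, itself guaranteed by Assumption 2, since $u$ is constant in $t^A$ on a neighbourhood of $r_A$. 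Apart from this routine verification the argument is purely local in the $A$th variable and never interacts with the other time directions, which is precisely the reason why the classical needle-variation formula survives verbatim in the $k$-symplectic setting.
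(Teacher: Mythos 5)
Your proposal is correct and follows essentially the same route as the paper: both reduce to the $A$th coordinate direction, write the integral (fundamental-theorem) form of the perturbed and unperturbed curves, use that the two sections coincide for $t^A\le r_A-l_As$ to localize the difference quotient to the interval $[r_A-l_As,r_A]$, and then pass to the limit $s\downarrow 0$. The only cosmetic difference is in the last step, where the paper invokes the estimate $\int_{t-s}^{t}X(\gamma(h),u(h))\,{\rm d}h=sX(\gamma(t),u(t))+o(s)$ (valid at Lebesgue times, as the subsequent remark notes), whereas you argue via joint continuity of the integrand under Assumption 2; these amount to the same justification in the smooth/locally constant setting.
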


\begin{proof}
In local coordinates $(q^{0_1},\dots, q^{0_k},q^1,\dots, q^n)$ for $\widehat{Q}$, note that
 \begin{multline*}
   q^i\circ \widehat{\sigma}[\pi_A^s](t^1,\dots, \hat{t^A}, \dots, t^k)(r_A)- q^i\circ \widehat{\sigma}[\pi_A^s](t^1,\dots, \hat{t^A}, \dots, t^k)(t_0^A)
\\=\int_{t_0^A}^{r_A} X_A^i( \widehat{\sigma}[\pi_A^s](t^1,\dots, \hat{t^A}, \dots, t^k)(t),u[\pi_A^s](t^1,\dots,t,\dots t^k))\, {\rm d}t
 \end{multline*}
for every $i\in \{0_1,\dots , 0_k, 1,\dots n\}$.

To compute the derivative of  $\varphi_{\pi_A}^{\mathbf{t}}$  at $s=0$ with $\mathbf{t}=(t^1,\dots, r_A,\dots, t^k)$ we use the definition of the derivative:
\begin{eqnarray*}
\left.\ds{\frac{\rm d}{{\rm d}s}}\right|_{s=0}(q^i\circ\varphi_{\pi_A}^{\mathbf{t}}) (s)&=&\lim_{s\rightarrow 0}
\ds{\frac{(q^i\circ\varphi_{\pi_A}^{\mathbf{t}}) (s)-(q^i\circ\varphi_{\pi_A}^{\mathbf{t}}) (0)}{s}}\\&=&
\lim_{s\rightarrow 0}
\ds{\frac{q^i\circ \widehat{\sigma}[\pi_A^s](t^1,\dots, r_A, \dots, t^k)-q^i\circ\widehat{\sigma}(t^1,\dots, r_A, \dots, t^k)}{s}}\\
 &=& \lim_{s\rightarrow 0} \left(\ds{\frac{\int^{r_A}_{t_0^A} X_A^i( \widehat{\sigma}[\pi_A^s](t^1,\dots, \hat{t^A}, \dots, t^k)(t),u[\pi_A^s](t^1,\dots,t,\dots t^k))
\, {\rm d}t}{s}}\right.
\end{eqnarray*}
\begin{eqnarray*} &&- \left.\ds{\frac{\int^{r_A}_{t_0^A} X_A^i( \widehat{\sigma}(t^1,\dots, \hat{t^A}, \dots, t^k)(t),u(t^1,\dots,t,\dots t^k))
\, {\rm d}t}{s}}\right)\\
&=&  \lim_{s\rightarrow 0}\left( \ds{\frac{\int^{r_A}_{r_A-l_As} X_A^i( \widehat{\sigma}[\pi_A^s](t^1,\dots, \hat{t^A}, \dots, t^k)(t),u_A)
\, {\rm d}t}{s}}\right. \\ &&-\left.\ds{\frac{\int^{r_A}_{r_A-l_As} X_A^i( \widehat{\sigma}(t^1,\dots, \hat{t^A}, \dots, t^k)(t),u(t^1,\dots,t,\dots t^k))
\, {\rm d}t}{s}}\right)={\mathcal C}.
\end{eqnarray*}

Let us use now the following equation
\begin{equation}
\int^t_{t-s} X(\gamma(h),u(h)){\rm d}h=s X(\gamma(t),u(t))+o(s), \label{eq:LebesgueTime}
\end{equation}
in the above formula having in mind that $o(s)$ tends to 0 when $s$ tends to 0. Then,
\begin{eqnarray*}
{\mathcal C}&=& \lim_{s\rightarrow 0} \left(\ds{\frac{X_A^i( \widehat{\sigma}[\pi_A^s](t^1,\dots, \hat{t^A}, \dots, t^k)(r_A),u_A)l_As}{s}}
\right.\\
&&-\left.\ds{\frac{X_A^i( \widehat{\sigma}(t^1,\dots, \hat{t^A}, \dots, t^k)(r_A),u(t^1,\dots,r_A,\dots t^k))l_As +o(s)}{s}}\right)\\
&=&\lim_{s\rightarrow 0} \left(X_A^i( \widehat{\sigma}[\pi_A^s](t^1,\dots, \hat{t^A}, \dots, t^k)(r_A),u_A)l_A \right.
\\&&\left.- X_A^i( \widehat{\sigma}(t^1,\dots, \hat{t^A}, \dots, t^k)(r_A),u(t^1,\dots,r_A,\dots t^k))l_A\right)\\
&=& [\widehat{X}_A^i(\widehat{\sigma}(t^1,\dots, r_A,\dots, t^k),u_A)-\widehat{X}_A^i(\widehat{\sigma}(t^1,\dots, r_A,\dots, t^k),
u(t^1,\dots, r_A,\dots, t^k))]l_A:=\widehat{v}^i[\pi_A]
\end{eqnarray*}
for each $i\in \{0_1,\dots, 0_k,1,\dots n\}$.
\label{prop:pertVector}
\end{proof}

Note that the tangent vector in Proposition~\ref{prop:pertVector} is in $\left(\tau^{k;A}_{\widehat{Q}}
(T^1_k\widehat{Q})\right)_{\widehat{\sigma}(t_0^1,\dots,t_0^k)}={\rm T}_{\widehat{\sigma}(t_0^1,\dots,t_0^k)}\widehat{Q}$. The vector $\widehat{v}[\pi_A]$ is called
the \textbf{Ath-elementary perturbation vector associated to the perturbation data $\pi_A=\{r_A,l_A,u_A\}$}. It is also
called an \textbf{Ath-perturbation vector of class I}.


Following the same lines as in~\cite{2009BM} we can define the associated Ath-perturbation vector obtained from
$c$ different Ath-perturbation data $\pi_{A_1},\dots, \pi_{A_c}$ with different and/or same  perturbation time $r_{A_1}, \dots, r_{A_c}$.

At each copy of the tangent bundle in the $k$-tangent bundle $T^1_k\widehat{Q}$, we construct an \textbf{Ath-tangent
perturbation cone}
\begin{equation}
K^A_t=\overline{{\rm conv}\left( \bigcup_{a<\tau \leq t} \left(\Phi^{\widehat{X}_A^{\{u[\pi_A^s]\}}}_{(t,\tau)}\right)_*{\mathcal V}^A_\tau
\right)}
 \label{eq:ConeAth}
\end{equation}
where ${\mathcal V}^A_\tau$ denotes the set of Ath-elementary perturbation vectors at $\tau$, $\left(\Phi^{\widehat{X}_A^{\{u[\pi_A^s]\}}}_{(t,\tau)}\right)_*$ is the pushforward of the flow of $\widehat{X}_A^{\{u[\pi_A^s]\}}$ with $\widehat{\sigma}(\tau)$ as initial condition at time $\tau$, $\overline{{\rm conv} W}$ denotes the closure of the convex hull of the set $W$.

\remark If the controls are only measurable and bounded, as usually assumed in control theory, all the perturbations and 
geometric elements such as vectors, cones, etc. that appear in the paper are only defined at Lebesgue times where the equality~\eqref{eq:LebesgueTime} is satisfied.

The definition of $k$ different perturbation cones in~\eqref{eq:ConeAth} implies that the perturbation data associated with different $A$th copies are not mixed. As proved in~\cite[Proposition 3.12]{2009BM}, the following result is true for the cones $K^A_t$ for every $A=1,\dots, k$.

\begin{prop}  Let $t^A\in [t_0^A,t_f^A]$. If  $v$ is a nonzero vector in the interior of $K_t^A$, then there
exists $\epsilon >0$ such that for every $s\in (0,\epsilon)$ there are $s'>0$ and a perturbation of the control
$u[\pi_A]$ such that \begin{equation*} \widehat{\sigma}[\pi_A^s](t^1,\dots, \hat{t^A}, \dots, t^k)(t^A)=\widehat{\sigma}(t^1,\dots, \hat{t^A}, \dots, t^k)(t^A)+ s'v.\end{equation*}
\label{Prop:Lemma2A}
\end{prop}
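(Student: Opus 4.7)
The plan is to reduce the statement to its classical analog along a single direction and then invoke the fixed-point argument from \cite[Proposition 3.12]{2009BM}. The key observation is that, once the external coordinates $(t^1,\dots,\hat t^A,\dots,t^k)$ are fixed, the map $s\mapsto \widehat\sigma[\pi_A^s](t^1,\dots,\hat t^A,\dots,t^k)(\cdot)$ is a genuine integral curve of the \emph{single} vector field $\widehat X_A^{\{u[\pi_A^s]\}}$ on $\widehat Q$ with initial condition $\widehat\sigma(t_0^1,\dots,t_0^k)$ at $t_0^A$. This reduces the $k$-symplectic statement to an ordinary-differential-equation statement in the $A$th time alone, to which the classical machinery applies verbatim.

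First, since $v$ lies in the interior of the convex cone $K^A_{t^A}$ defined in~\eqref{eq:ConeAth}, Carath\'eodory's theorem (applied in the finite-dimensional space $T_{\widehat\sigma(\mathbf t)}\widehat Q$) gives finitely many perturbation data $\pi_{A_1},\dots,\pi_{A_c}$ with pairwise distinct perturbation times $\tau_1<\dots<\tau_c\le t^A$ and positive convex coefficients $\lambda_1,\dots,\lambda_c$ with $\sum_i\lambda_i=1$ such that
\begin{equation*}
v \;=\; \sum_{i=1}^{c}\lambda_i\left(\Phi^{\widehat X_A}_{(t^A,\tau_i)}\right)_{\!*}\widehat v[\pi_{A_i}],
\end{equation*}
and moreover the convex hull of the pushed-forward elementary vectors contains an open neighbourhood of $v$ in its relative interior. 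For small enough $s>0$ the perturbation intervals $[\tau_i-l_{A_i}s,\tau_i]$ are pairwise disjoint, so the concatenated control $u[\pi_A^s]$ built from $\pi_{A_1},\dots,\pi_{A_c}$ (with durations scaled by $s$) is well defined.

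Second, the core computational step is a first-order expansion of the endpoint as a function of $s$. Applying Proposition~\ref{prop:pertVector} iteratively at the times $\tau_1,\dots,\tau_c$, and using the variation-of-constants formula for the flow of $\widehat X_A$ together with the Lebesgue identity~\eqref{eq:LebesgueTime}, one obtains
\begin{equation*}
\widehat\sigma[\pi_A^s](t^1,\dots,\hat t^A,\dots,t^k)(t^A) \;=\; \widehat\sigma(t^1,\dots,\hat t^A,\dots,t^k)(t^A)\;+\;s\sum_{i=1}^{c}\lambda_i\left(\Phi^{\widehat X_A}_{(t^A,\tau_i)}\right)_{\!*}\widehat v[\pi_{A_i}] \;+\; o(s),
\end{equation*}
which equals $\widehat\sigma(\mathbf t)(t^A)+sv+o(s)$ by the choice of $\lambda_i$. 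This gives the desired displacement in the direction of $v$ up to an $o(s)$ error.

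Third, to promote this asymptotic statement to the \emph{exact} equality $\widehat\sigma[\pi_A^s](\mathbf t)(t^A)=\widehat\sigma(\mathbf t)(t^A)+s'v$, I would parametrize the convex coefficients in a small simplex neighbourhood $\Delta$ of $(\lambda_1,\dots,\lambda_c)$ and define, for each $s\in(0,\epsilon)$, a continuous map $\Psi_s:\Delta\to T_{\widehat\sigma(\mathbf t)}\widehat Q$ that sends a choice of coefficients to $s^{-1}(\widehat\sigma[\pi_A^s](\mathbf t)(t^A)-\widehat\sigma(\mathbf t)(t^A))$. By the expansion above, $\Psi_s$ converges uniformly to the affine map whose image contains $v$ in its interior; Brouwer's fixed-point theorem (in the form used in \cite[Proposition 3.12]{2009BM}) then ensures that, for $s$ sufficiently small, $\Psi_s$ still hits a positive multiple $s'v$ of $v$ with $s'>0$. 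The main obstacle is this last topological step: one must verify that the perturbations at the different times $\tau_i$ can be combined without destroying continuity in the coefficients and that the asymptotic surjectivity survives the nonlinear error terms. Once the reduction to the single flow of $\widehat X_A$ is made rigorous, this is precisely the argument carried out in~\cite{2009BM}, and the $k$-symplectic nature of the problem plays no further role.
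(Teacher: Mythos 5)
Your proposal is correct and follows essentially the same route as the paper, which does not reprove this statement but reduces it, exactly as you do, to the single flow of $\widehat{X}_A^{\{u[\pi_A^s]\}}$ in the $t^A$ direction and then invokes the classical Carath\'eodory--plus--Brouwer argument of \cite[Proposition 3.12]{2009BM}. The only minor point to tidy is that the perturbation times $\tau_i$ need not be pairwise distinct (the paper explicitly allows ``different and/or same perturbation time''), which the standard concatenation of adjacent intervals handles without affecting your argument.
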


This proposition is essential to prove Pontryagin's Maximum Principle in $k$-symplectic formalism.

\begin{thm}[$k$-symplectic Pontryagin's Maximum Principle] If $\widehat{\phi}^*=(\widehat{\sigma}^*,u^*)\colon I_1\times \dots \times I_k\rightarrow
\widehat{Q}\times U$ is a solution of the extended k-symplectic optimal control problem $(\widehat{Q},U,\widehat{\mathbf{X}},F,\mathbf{I})$, Statement~\ref{State:HatkOCP}, such that $F$ satisfies assumptions 1 and 2, then there exists $(\widehat{\mathbf{\beta}},u)
\colon I_1\times \dots \times I_k\rightarrow (T^1_k)^*\widehat{Q}\times U$ along $\widehat{\sigma}^*$ such that
\begin{enumerate}
 \item $(\pi^A\circ \widehat{\beta},u)$ along $\widehat{\sigma}^*$ is a solution of~\eqref{eq:iXAHA} for each $A=1,\dots, k$;
\item the Hamiltonian $H_A\colon (T^1_k)^*\widehat{Q}\times U \rightarrow \mathbb{R}$ in~\eqref{eq:HA} along the optimal integral section
 is equal to the supremum of $H_A$ over the controls almost everywhere;
\item the supremum of the Hamiltonian $H_A\colon (T^1_k)^*\widehat{Q}\times U \rightarrow \mathbb{R}$ in~\eqref{eq:HA} along the 
optimal integral section is constant almost everywhere;
\item $\widehat{\beta}^A(\mathbf{t}) \neq 0 \in T^*_{\widehat{\sigma}^*(\mathbf{t})}\widehat{Q}$ for each $\mathbf{t}\in I_1\times \dots \times I_k$
and for every $A=1,\dots,k$;
\item $\beta^A_{0_1}(\mathbf{t}),\dots, \beta^A_{0_k}(\mathbf{t})$ are constant and $\beta^A_{0_A}$ is non-positive for every $A=1,\dots,k$ .
\end{enumerate}

\begin{proof} As $(\widehat{\sigma}^*,u^*)$ is a solution of the extended $k$-symplectic optimal control problem,  if $\mathbf{\tau}\in I_1\times \dots \times I_k$, for every initial condition $\widehat{\beta}_{\mathbf{\tau}}$ in $(T^1_k)^*\widehat{Q}$ there exists a unique curve $\widehat{\beta}$ in $(T^1_k)^*\widehat{Q}$ satisfying the $k$ equations in~\eqref{eq:iXAHA} and the initial condition.

As in the classical Pontryagin's Maximum Principle the initial condition must be conveniently chosen so that the rest of conditions in the theorem are fulfilled.

For each $A\in \{1,\dots,k\}$, consider the Ath-tangent perturbation cone $K^A_{t_f^A}\subseteq T_{\widehat{\sigma}^*(\mathbf{t}_f)}\widehat{Q}$ and the vector $(0,\dots, \stackrel{A}{-1},\dots, 0,0,\stackrel{n}{\dots},0)$ in $ T_{\widehat{\sigma}^*(\mathbf{t}_f)}\widehat{Q}$ that indicates the decreasing direction of the coordinate $q^{0_A}(\mathbf{t})=\int_{t_0^A}^{t^A} F(\widehat{\sigma}^*(t^1,\dots,h,\dots, t^k),u^*(t^1,\dots,h,\dots, t^k)){\rm d}h$.

Observe that if  $(0,\dots, \stackrel{A}{-1},\dots, 0,0,\stackrel{n}{\dots},0)$ was in the interior of $K^A_{t_f^A}$, then 
there would exist an Ath-perturbation data $\pi_A=\{r_A,l_A,u_A\}$ such that $(\widehat{\sigma}[\pi_A],u[\pi_A])$ passes through the same points on $Q$ as $\sigma^*=\pi_2\circ \widehat{\sigma}^*$, but $q^{0_A}[ \widehat{\sigma}[\pi_A]](\mathbf{t}_f)< q^{0_A}[ \widehat{\sigma}^*](\mathbf{t}_f)$. This is a contradiction with the fact that  $(\widehat{\sigma}^*,u^*)$ is a solution of the extended k-symplectic optimal control problem. Hence  $(0,\dots, \stackrel{A}{-1},\dots, 0,0,\stackrel{n}{\dots},0)$ cannot be in the interior of $K^A_{t_f^A}$.

Thus, there exists $\widehat{\beta}^A_{t_f^A}\in T^*_{\widehat{\sigma}^*(\mathbf{t}_f)}\widehat{Q}$  such that
\begin{eqnarray}
\langle \widehat{\beta}^A_{t_f^A}, (0,\dots, \stackrel{A}{-1},\dots, 0,0,\stackrel{n}{\dots},0) \rangle &\geq &0, \label{Eq:Sep1}\\
\langle \widehat{\beta}^A_{t_f^A}, \widehat{v}[\pi_A] \rangle &\leq &0 \quad  \forall \; \widehat{v}[\pi_A]\in K^A_{t_f^A}. \label{Eq:Sep2}
\end{eqnarray}
Condition~\eqref{Eq:Sep1} implies that $\beta^A_{0_A}\leq 0$. Let us explicitly write for each $A\in \{1,\dots,k\}$ the equations for the integral curves of $\widehat{X}_A^*$ that satisfy equation~\eqref{eq:iXAHA}:
\begin{equation*}
\begin{array}{lcllcl}
\ds{\frac{\partial q^{0_B}}{\partial t^A}}&=&F\delta_{A}^B, \quad &\ds{ \frac{\partial p^A_{0_B}}{\partial t^A}}&=&0,\\
\ds{\frac{\partial q^{i}}{\partial t^A}}&=&X^i_{A}, \quad &\ds{ \frac{ \partial p^A_i}{\partial t^A}}&=&\ds{-\frac{\partial H_A}{\partial q^i}=-p^A_{0_A}\frac{\partial F}{\partial q^i}-p^A_j\frac{\partial X^j_A}{\partial q^i}},
\end{array}
\end{equation*}
for $B=1,\dots,k$ and $i,j=1,\dots,n$. Note that there are no equations for $p^B_i$ with $B\neq A$. Hence the momenta whose 
coordinates are $p^B_i$ remain undetermined. They will be determined by solving~\eqref{eq:iXAHA} with $A=B$.  Given an initial 
condition in the Ath-copy of $T^*_{\widehat{\sigma}^*(t_f)} \widehat{Q}$, we just solve the equations in the fiber for $p^A$.

If $\widehat{\beta}^A_{t_f^A}=\mathbf{0}\in  T^*_{\widehat{\sigma}^*(\mathbf{t}_f)}\widehat{Q}$, then the solution 
to~\eqref{eq:iXAHA} in the fiber will be zero along $\widehat{\sigma}^*$ because of the linearity of the differential equation
in the momenta. If the momenta is zero, it does not provide us with any information related to the separation condition. Hence, 
$\widehat{\beta}^A(\mathbf{t})\neq \mathbf{0}\in  T^*_{\widehat{\sigma}^*(\mathbf{t})}\widehat{Q}$ for every $\mathbf{t}\in \mathbf{I}$.

As in the classical Pontryagin's Maximum Principle, condition~\eqref{Eq:Sep2} and the definition of the Ath-elementary perturbation vector in~\eqref{Eq:PertVectorA} prove the condition about the supremum of the Hamiltonian $H_A\colon (T^1_k)^*\widehat{Q}\times U \rightarrow \mathbb{R}$ in~\eqref{eq:HA} over the controls for each $A\in \{1,\dots,k\}$. The constancy of the supremum of the Hamiltonian over the controls is proved analytically, analogously to the classical Pontryagin's Maximum Principle, see~\cite{2009BM,P62,S98Free} for more details.

From equations~\eqref{eq:iXAHA} we deduce that for each $A\in \{1,\dots,k\}$, $\beta^A_{0_B}$ is constant for every 
$B\in \{1,\dots,k\}$ along the optimal integral section $\widehat{\phi}^*=(\widehat{\sigma}^*,u^*)$.
\end{proof}
\label{thm:kPMP}
\end{thm}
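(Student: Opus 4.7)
The strategy is to run the classical Pontryagin argument slot-by-slot in $A\in\{1,\dots,k\}$, exploiting the fact, visible from~\eqref{eq:LocaliXAHA}, that under Assumptions~1 and~2 the equations~\eqref{eq:iXAHA} for the $A$th copy of the momenta decouple from the other copies. First I fix $A$ and consider the $A$th tangent perturbation cone $K^A_{t_f^A}\subseteq T_{\widehat{\sigma}^*(\mathbf{t}_f)}\widehat{Q}$ built in~\eqref{eq:ConeAth}. The key geometric observation is that the ``cost-decreasing'' vector $e_A=(0,\dots,\stackrel{A}{-1},\dots,0,0,\stackrel{n}{\dots},0)$ cannot lie in the interior of $K^A_{t_f^A}$: otherwise Proposition~\ref{Prop:Lemma2A} would produce a perturbation $u[\pi_A^s]$ whose associated $\widehat{\sigma}[\pi_A^s]$ projects on $Q$ to a section still joining $q_0$ with $q_f$ but strictly decreases the functional ${\mathcal F}_A$ of~\eqref{eq:FA}, contradicting optimality of $\widehat{\phi}^*$ in Statement~\ref{State:HatkOCP}.

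A standard Hahn--Banach separation then supplies a nonzero covector $\widehat{\beta}^A_{t_f^A}\in T^*_{\widehat{\sigma}^*(\mathbf{t}_f)}\widehat{Q}$ satisfying~\eqref{Eq:Sep1} and~\eqref{Eq:Sep2}; the first inequality already forces $\beta^A_{0_A}(\mathbf{t}_f)\le 0$. I propagate $\widehat{\beta}^A_{t_f^A}$ backwards along $\widehat{\sigma}^*$ by integrating the $A$th Hamilton equation~\eqref{eq:iXAHA} in its local form~\eqref{eq:LocaliXAHA}. Because that system is linear and homogeneous in $p^A$ and no $p^B$ with $B\neq A$ enters it, the curve $\widehat{\beta}^A(\mathbf{t})$ is uniquely determined on all of $\mathbf{I}$ and must remain nonzero (else, by linearity, it would vanish also at $\mathbf{t}_f$). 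Constancy of each $\beta^A_{0_B}$ is then read off directly from~\eqref{eq:LocaliXAHA}: since neither $F$ nor $X_A$ depends on the added coordinates $q^{0_B}$, one has $\partial p^A_{0_B}/\partial t^A=-\partial H_A/\partial q^{0_B}=0$.

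For the pointwise maximum condition I combine~\eqref{Eq:Sep2} with the explicit formula~\eqref{Eq:PertVectorA} for an elementary perturbation vector and use the flow-invariance built into~\eqref{eq:ConeAth} to transfer the inequality to an arbitrary Lebesgue time $r_A\in[t_0^A,t_f^A]$, obtaining
\begin{equation*}
\langle \widehat{\beta}^A(\mathbf{t}),\,\widehat{X}_A(\widehat{\sigma}^*(\mathbf{t}),u_A)-\widehat{X}_A(\widehat{\sigma}^*(\mathbf{t}),u^*(\mathbf{t}))\rangle\,l_A \le 0
\end{equation*}
for every $u_A\in U$ and every $l_A>0$, which by the definition~\eqref{eq:HA} rearranges to $H_A(\widehat{\beta}^A(\mathbf{t}),u_A)\le H_A(\widehat{\beta}^A(\mathbf{t}),u^*(\mathbf{t}))$ almost everywhere. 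The constancy of the supremum of $H_A$ along the optimal section is then obtained by the standard analytic time-shift/absolutely-continuous argument, imported verbatim from~\cite{2009BM,P62,S98Free}.

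The main obstacle I foresee is the endpoint-matching used in the very first step: one has to verify rigorously that the $A$th perturbation, which only alters $u$ on a slab along the $t^A$ axis, produces a section still satisfying the $k$-symplectic extended equations along the remaining $k-1$ directions and whose projection to $Q$ still reaches $q_f$ to the order needed for the cone argument. Here the decoupling provided by Assumptions~1 and~2 and the local-constancy of the perturbation must be invoked delicately; one also needs to check that running $k$ independent separations simultaneously is consistent, i.e.\ that obstructing each direction $e_A$ in its own cone $K^A_{t_f^A}$ really captures the full optimality information of~\eqref{eq:FA}, since, as noted in Remark~\ref{RemNoEquivalence}, the extended and original $k$-symplectic problems are not equivalent.
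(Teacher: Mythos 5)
Your proposal follows the paper's own proof essentially step for step: the same cone/separation argument at $\mathbf{t}_f$ for each fixed $A$, the same backward propagation of $\widehat{\beta}^A$ via the decoupled linear equations~\eqref{eq:LocaliXAHA} to get nontriviality and constancy of the $\beta^A_{0_B}$, and the same use of~\eqref{Eq:Sep2} together with~\eqref{Eq:PertVectorA} for the maximum condition, with the constancy of the supremum delegated to the classical references. The approach is correct and matches the paper; your closing caveat about endpoint-matching and the non-equivalence of the extended and original problems is a fair observation, but the paper handles it at the same level of detail you do.
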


\section{Application of unified formalism for $k$-cosymplectic to  implicit PDEs} \label{Sec:UnifiedKSymplectic}

We are going to apply the unified Skinner-Rusk formalism for $k$-cosymplectic field theories developed in~\cite[Section 4]{2012BcnSantiago} to the dynamics description for systems given by implicit partial differential equations. 
This will be very useful to develop Sections~\ref{Sec:UnifControlImplicitEDPS} and~\ref{Sec:example} so that physical examples
associated with higher order control partial differential equations fit in the approach considered in this paper.

The Whitney sum $T^1_kQ \oplus \left(T^1_k\right)^*Q$ has natural bundle structures over $T^1_kQ$ and $\left(T^1_k\right)^*Q$. The suitable bundle to describe non-autonomous dynamical systems governed by partial differential equations is ${\mathcal W}\colon = \mathbb{R}^k\times \left( T^1_kQ \oplus \left(T^1_k\right)^*Q\right)$. Local coordinates for ${\mathcal W}$ are $(t^B,q^i,v^i_A,p^A_i)$.  Let us denote by ${\rm pr}_1\colon \mathbb{R}^k\times \left(T^1_kQ \oplus \left(T^1_k\right)^*Q  \right)\rightarrow \mathbb{R}^k$, ${\rm pr}_2\colon \mathbb{R}^k\times \left(T^1_kQ \oplus \left(T^1_k\right)^*Q \right) \rightarrow T^1_kQ $ and
  ${\rm pr}_3\colon \mathbb{R}^k\times \left( T^1_kQ \oplus \left(T^1_k\right)^*Q \right) \rightarrow \left(T^1_k\right)^*Q$ 
the local projections into the first, second and third factor of ${\mathcal W}$, respectively. Locally, 
\begin{eqnarray*}
{\rm pr}_1(t^B,q^i,v^i_A,p^A_i)&=&t^B,\\
{\rm pr}_2(t^B,q^i,v^i_A,p^A_i)&=&(q^i,v^i_A)=(q,\mathbf{v}), \\
{\rm pr}_3(t^B,q^i,v^i_A,p^A_i)&=&(q^i,p^A_i)=(q,\mathbf{p}).
\end{eqnarray*}

Let $({\rm d}t^1,\dots,{\rm d}t^k)$ and $(\omega_1,\dots, \omega_k)$ be the canonical forms on $\mathbb{R}^k\times (T^1_k)^*Q$. We denote by $(\vartheta^1,\dots,\vartheta^l)$ and $(\Omega_1,\dots,\Omega_k)$ the pullback by ${\rm pr}_1$ and ${\rm pr}_3$ of these forms to $\mathbb{R}^k\times \left(T^1_kQ \oplus \left(T^1_k\right)^*Q \right)$, that is, $\vartheta^A=({\rm pr}_1)^*({\rm d}t^A)$ and  $\Omega_A=({\rm pr}_3)^*(\omega_A)$ for $1\leq A \leq k$. Locally,
\begin{equation} \vartheta^A={\rm d}t^A, \quad \Omega_A={\rm d}q^i \wedge {\rm d}p^A_i.
\label{eq:Omega0ALocal}
\end{equation}
The coupling function ${\mathcal C}$ on $\mathbb{R}^k\times \left( T^1_kQ \oplus \left(T^1_k\right)^*Q \right)$ is defined as follows
\begin{eqnarray*}
{\mathcal C}\colon \qquad \mathbb{R}^k \times T^1_kQ \oplus \left(T^1_k\right)^*Q  &  \longrightarrow & \mathbb{R}\\
(\mathbf{t},\mathbf{v}_{q},\mathbf{p}_{q})  & \longmapsto & \sum_{A=1}^k p^A_{q}(v_{A_q})=\sum_{A=1}^k \left(p^A_iv^i_A\right).
\end{eqnarray*}
Given a Lagrangian function $\mathbb{L}$ on  $\mathbb{R}^k\times T^1_kQ$, the Hamiltonian function $\mathbf{H}$ on  
$\mathbb{R}^k\times \left(T^1_kQ \oplus \left(T^1_k\right)^*Q \right)$ is defined as follows
\begin{equation}\label{eq:HunifSR}
H={\mathcal C}-({\rm pr}_1 \times {\rm pr}_2)^*\mathbb{L}.
\end{equation}
Locally, $H(t,q^i,v^i_A,p^A_i)=p^A_iv^i_A-\mathbb{L}(t,q^i,v^i_A)$.

The problem in  the Skinner-Rusk formalism for $k$-cosymplectic field theories  consists of finding integral sections $\phi\colon \mathbb{R}^k\rightarrow \mathbb{R}^k\times \left(T^1_kQ \oplus \left(T^1_k\right)^*Q \right)$ of an integrable $k$-vector field $\mathbf{Z}=(Z_1,\dots, Z_k)$  on $ \mathbb{R}^k\times \left(T^1_kQ \oplus \left(T^1_k\right)^*Q \right)$ such that:
\begin{equation}\label{eq:DynUnifSR}
\sum_{A=1}^k {\rm i}_{Z_A} \Omega_A={\rm d}H-\sum_{A=1}^k\dfrac{\partial H}{\partial t^A}\vartheta^A, \quad {\rm i}_{Z_A} \vartheta^B=\delta^B_A.
\end{equation}
See~\cite{2012BcnSantiago} for more details.



After summarizing briefly the Skinner-Rusk formalism for $k$-cosymplectic field theories, here we are interested in adapting it to find the dynamics of systems described by implicit partial differential equations. An implicit dynamical system  $(\mathbb{L},M)$  is  described by the submanifold
\begin{equation*}
M=\{(t^B,q^i,v^i_A)\in \mathbb{R}^k\times (T^1_k) Q \; | \; \Psi^\alpha(t^B,q^i,v^i_A)=0, \; 1\leq \alpha \leq s \},
\end{equation*}
of $\mathbb{R}^k\times T^1_k Q$, where ${\rm d}\Psi^1\wedge \dots \wedge {\rm d} \Psi^s\neq 0$, and a Lagrangian function $\mathbb{L}\in {\mathcal C}^\infty(M)$. This submanifold $M$ of $\mathbb{R}^k\times T^1_k Q$ can be naturally embedded by $\iota^M\colon M \hookrightarrow \mathbb{R}^k\times T^1_k Q$. 

In order to adapt the above formalism to this kind of dynamical systems, we must define the $k$-symplectic implicit bundle ${\mathcal W}^M=M\times_Q (T^1_k)^*Q$ and the corresponding canonical immersion
\begin{equation}
{\rm i}^{M}\colon {\mathcal W}^M \hookrightarrow {\mathcal W}\colon =\mathbb{R}^k\times \left( T^1_kQ \oplus \left(T^1_k\right)^*Q\right). \label{eq:DefineM}
\end{equation}
Now we can consider the pullback of the coupling function and the canonical forms on ${\mathcal W}$ to $ {\mathcal W}^M$:
\begin{equation*}
{\mathcal C}^{{\mathcal W}^M}=({\rm i}^M)^*({\mathcal C}), \quad \vartheta^A_{{\mathcal W}^M}=({\rm i}^M)^*(\vartheta^A),\quad  \Omega_A^{{\mathcal W}^M}=({\rm i}^M)^*(\Omega_A).
\end{equation*}

Let $\rho_1^M \colon {\mathcal W}^M \rightarrow M$ be the natural projection,  we define the Hamiltonian function $H_{{\mathcal W}^M}\colon {\mathcal W}^M \rightarrow \mathbb{R}$
as follows
\begin{equation*}
H_{{\mathcal W}^M}={\mathcal C}^{{\mathcal W}^M}-(\rho_1^M)^*\mathbb{L}.
\end{equation*}

Analogously to~\eqref{eq:DynUnifSR}, the problem of describing the dynamics of $(\mathbb{L},M)$ consists of finding  the integral sections $\phi\colon \mathbb{R}^k \rightarrow {\mathcal W}^M$ of an integrable 
$k$-vector field  $\mathbf{Z}=(Z_1,\dots,Z_k)$ on ${\mathcal W}^M$ such that
\begin{equation}
\sum_{A=1}^k {\rm i}_{Z_A}\Omega_A^{{\mathcal W}^M}={\rm d}H_{{\mathcal W}^M}-\sum_{A=1}^k\dfrac{\partial H_{{\mathcal W}^M}}{\partial t^A}\vartheta^A_{{\mathcal W}^M}, 
\quad {\rm i}_{Z_A}{\rm d}t^B=\delta^B_A. \label{eq:kSymplecticImplicit-1onWM}
\end{equation}
Or equivalently, the problems consists of finding  the integral sections $\phi\colon \mathbb{R}^k \rightarrow {\mathcal W}$ of an integrable 
$k$-vector field  $\mathbf{Z}=(Z_1,\dots,Z_k)$ on ${\mathcal W}=\mathbb{R}^k\times \left(T^1_kQ \oplus \left(T^1_k\right)^*Q\right)$ such that
\begin{equation}
\sum_{A=1}^k {\rm i}_{Z_A}\Omega_A={\rm d}H_{{\mathcal W}^M}-\sum_{A=1}^k\dfrac{\partial H_{{\mathcal W}^M}}{\partial t^A}\vartheta^A+\lambda_\alpha{\rm d} \Psi^\alpha-\lambda_\alpha\sum_{A=1}^k\dfrac{\partial \Psi^\alpha}{\partial t^A}\vartheta^A , 
\quad {\rm i}_{Z_A}\vartheta^B=\delta^B_A. \label{eq:kSymplecticImplicit-1}
\end{equation}
This equation is obtained from~\eqref{eq:kSymplecticImplicit-1onWM} by rewritting the equations on ${\mathcal W}$ so that the constraints $\psi^{\alpha}=0$ in~\eqref{eq:DefineM} must be added to the equation in a suitable way.

 The Ath vector field $Z_A$ on ${\mathcal W}$ is locally given by
\begin{equation*}
Z_A=(Z_A)^B_t \dfrac{\partial }{\partial t^B}+(Z_A)^i \dfrac{\partial }{\partial q^i}+(Z_A)^i_B \dfrac{\partial }{\partial v^i_B}+(Y_A)^B_i \dfrac{\partial}{\partial p^B_i}.
\end{equation*}
From~\eqref{eq:kSymplecticImplicit-1} we first have $(Z_A)^A_t=1$, $(Z_A)^B_t=0$ for $B\neq A$. Moreover, 
\begin{eqnarray*}
\sum_{A=1}^k {\rm i}_{Z_A}\Omega_A&-&{\rm d}H_{{\mathcal W}^M}+\sum_{A=1}^k\dfrac{\partial H_{{\mathcal W}^M}}{\partial t^A}\vartheta^A-\lambda_\alpha{\rm d} \Psi^\alpha+\lambda_\alpha\sum_{A=1}^k\dfrac{\partial \Psi^\alpha}{\partial t^A}\vartheta^A \\
&=& (Z_A)^i {\rm d}p^A_i-\left(\sum_{A=1}^k(Y_A)^A_i\right) {\rm d}q^i-v^i_A {\rm d}p_i^A-p^A_i {\rm d}v^i_A +\dfrac{\partial \mathbb{L}}{\partial q^i} {\rm d}q^i+\dfrac{\partial \mathbb{L}}{\partial v^i_A} {\rm d}v^i_A\\
&-&\lambda_\alpha \dfrac{\partial \Psi^\alpha}{\partial q^i}{\rm d}q^i-\lambda_\alpha \dfrac{\partial \psi^\alpha}{\partial v^i_A}{\rm d}v^i_A=0.
\end{eqnarray*}
Thus, 
\begin{eqnarray}
 (Z_A)^i&=&  v^i_A, \label{eq:ImplicitViA} \\
 \sum_{A=1}^k(Y_A)^A_i&=& \dfrac{\partial \mathbb{L}}{\partial q^i}-\lambda_\alpha \dfrac{\partial \Psi^\alpha}{\partial q^i} , \label{Eq:YAAiKSymplImplicit-1} \\
p^A_i&=& \dfrac{\partial \mathbb{L}}{\partial v^i_A}-\lambda_\alpha \dfrac{\partial \Psi^\alpha}{\partial v^i_A}. \label{Eq:pAiKSymplImplicit-1}
\end{eqnarray}
By also imposing the conditions~\eqref{eq:0Condition1} and~\eqref{eq:0Condition2} in the $k$-vector field on ${\mathcal W}$, we have
\begin{equation*}
Z_A= \dfrac{\partial }{\partial t^A}+v_A^i \dfrac{\partial }{\partial q^i}+ (Z_A)^i_B \dfrac{\partial }{\partial v^i_B}+ (Y_A)^A_i \dfrac{\partial }{\partial p^A_i},
\end{equation*}
with
\begin{equation*}
p^A_i=\dfrac{\partial \mathbb{L}}{\partial v^i_A}-\lambda_\alpha \dfrac{\partial \Psi^\alpha}{\partial v^i_A}, \quad 
 \sum_{A=1}^k(Y_A)^A_i= \dfrac{\partial \mathbb{L}}{\partial q^i}-\lambda_\alpha \dfrac{\partial \Psi^\alpha}{\partial q^i}.
\end{equation*}

If $\mathbf{Z}$ is a solution of~\eqref{eq:kSymplecticImplicit-1}, then we must start a constraint algorithm in the sense of~ \cite{1978Gotay}. To be more precise,  each $Z_A$ must be tangent to the submanifold $M_L$  contained in ${\mathcal W}^M$ and defined by~\eqref{Eq:pAiKSymplImplicit-1}. That is, the following tangency conditions must be satisfied
\begin{eqnarray}
0&=&Z_A(\Psi^\alpha), \label{Eq:TangCond1}\\
0&=&Z_A\left( p^B_i-\dfrac{\partial \mathbb{L}}{\partial v^i_B}+\lambda_\alpha \dfrac{\partial \Psi^\alpha}{\partial v^i_B}\right), \label{Eq:TangCond2}
 \end{eqnarray}
 on $M_L$ for every $A=1,\dots,k$. Depending on the particular examples, some components will be determined and the constraint algorithm must proceed until stabilization. 

%
\remark For non-autonomous $k$-symplectic explicit dynamical systems the manifold $M$ is defined by constraints locally given by $\Psi^\alpha(t,q^i,v^i_A)=v^i_A-X^i_A(t,q^i)=0$. Hence the above process can be used for this kind of dynamical systems.

\section{Unified formalism for optimal control problems governed by an implicit partial differential equation}\label{Sec:UnifControlImplicitEDPS}

We extend now the unified formalism for implicit control systems developed in \cite[Section 4]{2007SRNuestro} to 
optimal control problems whose dynamics is given by implicit control partial differential equations, instead of just explicit control partial differential equations as developed in Section~\ref{Sec:Setting}. The problem consists of finding the solutions to optimal control problems governed by an implicit partial differential equation by taking advantage of the unified formalism developed in this section. See Section~\ref{Sec:example} for a particular problem where this unified formalism is used.

Let $C$ be the control bundle with natural coordinates $(t^A,q^i,u^a)$. In contrast to the explicit description of control partial differential equations in Section~\ref{Sec:Setting}, let us consider now the case where the control partial differential equations are given implicitly by the following submanifold
\begin{equation*}
M_C=\{(t^B,u^a,q^i,v^i_A)\in  C\times_Q T^1_k Q  \; | \; \Psi^\alpha(t^B,u^a,q^i,v^i_A)=0, \; 1\leq \alpha \leq s \}
\end{equation*}
of $ C\times_Q T^1_k Q$, where ${\rm d}\Psi^1\wedge \dots \wedge {\rm d} \Psi^s\neq 0$. There exists a natural embedding  $\iota^{M_C}\colon M_C \hookrightarrow C\times_Q  T^1_k Q $. Then the implicit optimal control problem under consideration is determined by $(\mathbb{L},M_C)$, where $\mathbb{L}\in {\mathcal C}^\infty(M_C)$ is a Lagrangian function. 

Let us define now the $k$-symplectic implicit control bundle ${\mathcal W}^{M_C}=M_C\times_Q (T^1_k)^*Q$ which is a submanifold of $C\times_{\mathbb{R}^k \times Q} {\mathcal W}=C\times_{\mathbb{R}^k \times Q} \left(\mathbb{R}^k\times \left( T^1_k Q \oplus (T^1_k)^*Q\right)\right)$. Then we have, respectively, the canonical immersion and the natural projection:
\begin{equation*}
{\rm i}^{M_C}\colon {\mathcal W}^{M_C} \hookrightarrow C\times_{\mathbb{R}^k \times Q} {\mathcal W},  \quad 
\sigma_{\mathcal W}\colon C\times_{\mathbb{R}^k \times Q} {\mathcal W} \rightarrow {\mathcal W}.
\end{equation*}
Now we can consider the pullback of the coupling function in Section~\ref{Sec:UnifiedKSymplectic} and the canonical forms on $ {\mathcal W}$ to $ {\mathcal W}^{M_C}$:
\begin{equation*}
{\mathcal C}^{{\mathcal W}^{M_C}}=(\sigma_{\mathcal W}\circ {\rm i}^{M_C})^*({\mathcal C}), \quad \Omega_A^{{\mathcal W}^{M_C}}=(\sigma_{\mathcal W}\circ {\rm i}^{M_C})^*(\Omega_A), \quad \vartheta^A_{{\mathcal W}^{M_C}}=(\sigma_{\mathcal W}\circ {\rm i}^{M_C})^*(\vartheta^A)
\end{equation*}

Let $\rho_1^{M_C} \colon {\mathcal W}^{M_C} \rightarrow M_C$ be the natural projection, the Hamiltonian function $H_{{\mathcal W}^{M_C}}\colon {\mathcal W}^{M_C} \rightarrow \mathbb{R}$ is defined as follows
\begin{equation*}
H_{{\mathcal W}^{M_C}}={\mathcal C}^{{\mathcal W}^{M_C}}-(\rho_1^{M_C})^*\mathbb{L}.
\end{equation*}
%
The dynamics of the optimal control problem $(\mathbb{L},M_C)$ is determined by the solutions of the equations
\begin{equation}
\sum_{A=1}^k{\rm i}_{Z_A}\left(\Omega_A^{{\mathcal W}^{M_C}}\right)=0, \quad {\rm i}_{Z_A}\vartheta^B_{{\mathcal W}^{M_C}}=\delta^B_A, \label{eq:ImplkOCP}
\end{equation}
for a $k$-vector field $\mathbf{Z}=(Z_1,\dots, Z_k)$ on ${\mathcal W}^{M_C}$.

In order to work in local coordinates we need the following proposition whose proof is straightforward.

\begin{prop}
For a given $w\in {\mathcal W}^{M_C}$, the
following conditions are equivalent:
\begin{enumerate}
\item[(1)]  There exists  a $k$-vector field  $\mathbf{Z}_w \in (T^1_k)_w{\mathcal W}^{M_C}$ verifying that
\[
\sum_{A=1}^k\Omega_A^{{\mathcal W}^{M_C}}((Z_A)_w, (Y_A)_w) =0\ , \ \mbox{\rm for every
$\mathbf{Y}_w\in (T^1_k)_w {\mathcal W}^{M_C}$} \ .
\]
\item[(2)]  There exists a $k$-vector field $\mathbf{Z}_w\in (T^1_k)_w (C\times_{\mathbb{R}^k\times Q} {\mathcal W})$
verifying that
 \begin{enumerate}
\item[(i)] $\mathbf{Z}_w\in (T^1_k)_w {\mathcal W}^{M_C}$,
\item[(ii)] $\sum_{A=1}^k{\rm i}_{(Z_A)_w}(\sigma_{\cal W}^*(\Omega_A))_w\in ((T^1_k)_w {\mathcal W}^{M_C})^0$
\ .
\end{enumerate}
\end{enumerate}
\label{2cons}
\end{prop}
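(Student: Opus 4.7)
The plan is to unwind the definition $\Omega_A^{{\mathcal W}^{M_C}}=(\sigma_{{\cal W}}\circ \iota^{M_C})^{*}\Omega_A$ and then apply the elementary pullback identity $(\iota^{M_C})^{*}\alpha\,(v_1,v_2)=\alpha\bigl((\iota^{M_C})_{*}v_1,(\iota^{M_C})_{*}v_2\bigr)$ together with the fact that $\iota^{M_C}\colon {\mathcal W}^{M_C}\hookrightarrow C\times_{\mathbb{R}^{k}\times Q}{\mathcal W}$ is an embedding, so that $T_{w}{\mathcal W}^{M_C}$ can be identified with its image inside $T_{w}(C\times_{\mathbb{R}^{k}\times Q}{\mathcal W})$. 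Under this identification, the annihilator condition $\sum_{A}i_{Z_A}(\sigma_{{\cal W}}^{*}\Omega_A)\in ((T^1_k)_{w}{\mathcal W}^{M_C})^{0}$ is, by definition, the assertion that for every $Y_w$ tangent to ${\mathcal W}^{M_C}$ one has $\sum_{A}(\sigma_{{\cal W}}^{*}\Omega_A)(Z_A,Y_w)=0$. This is the geometric bridge between the two formulations.

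For the implication (1)$\Rightarrow$(2), I take the k-vector $\mathbf{Z}_w\in (T^1_k)_w{\mathcal W}^{M_C}$ provided by (1) and include it in $(T^1_k)_{w}(C\times_{\mathbb{R}^k\times Q}{\mathcal W})$ via $(\iota^{M_C})_{*}$; condition (i) is then automatic. For (ii), for each $Y_w\in T_w{\mathcal W}^{M_C}$ and each $A$, the pullback identity gives $(\sigma_{{\cal W}}^{*}\Omega_A)(Z_A,Y_w)=\Omega_A^{{\mathcal W}^{M_C}}(Z_A,Y_w)$; summing over $A$ and using (1) (by choosing suitable $\mathbf{Y}_w$) yields $\sum_{A}i_{Z_A}(\sigma_{{\cal W}}^{*}\Omega_A)(Y_w)=0$, which is exactly the required annihilator condition.

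Conversely, for (2)$\Rightarrow$(1), condition (i) directly identifies $\mathbf{Z}_w$ as an element of $(T^1_k)_{w}{\mathcal W}^{M_C}$. For any $\mathbf{Y}_w=(Y_{1w},\dots,Y_{kw})\in (T^1_k)_{w}{\mathcal W}^{M_C}$, the pullback identity applied component by component and the annihilator condition (ii) together imply
\[
\sum_{A=1}^{k}\Omega_A^{{\mathcal W}^{M_C}}((Z_A)_w,(Y_A)_w)=\sum_{A=1}^{k}(\sigma_{{\cal W}}^{*}\Omega_A)((Z_A)_w,(Y_A)_w)=0,
\]
which is condition (1).

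I do not expect a genuine obstacle here: the whole argument is linear algebra in $T_w$ and $T^{*}_w$, mediated by the immersion $\iota^{M_C}$. The only point requiring a bit of care is the standard identification of the annihilator of $T_w{\mathcal W}^{M_C}$ inside $T_{w}^{*}(C\times_{\mathbb{R}^k\times Q}{\mathcal W})$ with the vanishing of the appropriate contractions on tangent vectors of the submanifold; once this is stated cleanly, both implications reduce to the same pullback identity. This is why the paper comments that the proof is straightforward.
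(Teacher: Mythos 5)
Your argument is correct and is precisely the ``straightforward'' proof the paper alludes to without writing out: since $\Omega_A^{{\mathcal W}^{M_C}}=({\rm i}^{M_C})^*\bigl(\sigma_{\mathcal W}^*\Omega_A\bigr)$, both implications reduce to the pullback identity plus the identification of the annihilator $((T^1_k)_w{\mathcal W}^{M_C})^0$ with the vanishing of the contraction on vectors tangent to the submanifold. The only minor point is notational: the embedding mediating the identification of $T_w{\mathcal W}^{M_C}$ inside $T_w(C\times_{\mathbb{R}^k\times Q}{\mathcal W})$ is ${\rm i}^{M_C}$ in the paper's notation, not $\iota^{M_C}$ (which there denotes $M_C\hookrightarrow C\times_Q T^1_kQ$), but this does not affect the argument.
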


As a consequence of this last proposition, we can obtain
the implicit optimal control equations using condition (2) in Proposition~\ref{2cons} as
follows: there exists a $k$-vector field $\mathbf{Z}$ on $C\times_{\mathbb{R}^k\times Q}{\mathcal W}$ such
that \begin{itemize}
\item[(i)]
$\mathbf{Z}$ is tangent to ${\mathcal W}^{M_C}$;
\item[(ii)]
the $1$-form $\sum_{A=1}^k{\rm i}_{(Z_A)}(\sigma_{\cal W}^*(\Omega_A))$
is null on the $k$-vector fields tangent to ${\mathcal W}^{M_C}$.
\end{itemize}
As ${\mathcal W}^{M_C}=M_C\times_Q ({\rm T}^1_k)^* Q$  and the constraints are
 $\Psi^\alpha=0$; then there exist
$\lambda_{\alpha} \in{\mathcal C}^\infty(C\times_{\mathbb{R}^k\times Q}{\mathcal W})$,
to be determined, such that
\begin{equation}
\sum_{A=1}^k{\rm i}_{(Z_A)}(\sigma_{\cal W}^*(\Omega_A))\vert_{{\mathcal
W}^{M_C}}= \left({\rm d} H_{{\mathcal W}^{M_C}}-\dfrac{\partial H_{{\mathcal W}^{M_C}}}{\partial t^A} \vartheta^A + \lambda_{\alpha}{\rm d}\Psi^{\alpha} -\lambda_\alpha \dfrac{\partial \Psi^\alpha}{\partial t^A} \vartheta^A\right)\vert_{{\mathcal W}^{M_C}}.  \label{Eq:iZAW0MC}
\end{equation}
As usual, the undetermined functions $\lambda_{\alpha}$'s
are called Lagrange multipliers.

Now using coordinates $(t^B, u^a,q^i,v^i_A,p^A_i)$ in $C\times_{\mathbb{R}^k\times Q}
{\mathcal W}$, we look for $k$ vector fields
\[
Z_A=(Z_A)^B_t\dfrac{\partial}{\partial t^B}+(Z_A)^a\dfrac{\partial}{\partial u^a}+(Z_A)^i\dfrac{\partial}{\partial q^i}+(Z_A)^i_B\dfrac{\partial}{\partial v^i_B}
+(Y_A)^B_i\frac{\partial}{\partial p^B_i} \ ,
\]
where $(Z_A)^B_t$, $(Z_A)^a$, $(Z_A)^i$, $(Z_A)^i_B$, $(Y_A)^B_i$ are unknown functions on
${\mathcal W}^{M_C}$ verifying the equation
\begin{eqnarray*}
0&=&\sum_{A=1}^k{\rm i}_{Z_A}\left({\rm d} q^i\wedge {\rm d} p_i^A\right)- {\rm d} \left(\sum_{A=1}^k (p_i^Av^i_A)-\mathbb{L} (t,u,q,\mathbf{v})\right) +\dfrac{\partial H_{{\mathcal W}^{M_C}}}{\partial t^A} {\rm d}t^A \\ 
&&-\lambda_{\alpha}{\rm d}\Psi^{\alpha} +\lambda_\alpha \dfrac{\partial \Psi^\alpha}{\partial t^A} {\rm d}t^A\\
&=& \left( -\sum_{A=1}^k(Y_A)^A_i-\lambda_\alpha \dfrac{\partial \Psi^\alpha}{\partial q^i}+ \dfrac{\partial \mathbb{L}}{\partial q^i}\right){\rm d}q^i + \left(\frac{\partial \mathbb{L} }{\partial u^a}
-\lambda_{\alpha}\frac{\partial\Psi^{\alpha}}{\partial u^a}\right){\rm d} u^a
\\ &&+\left( - p^A_i+ \dfrac{\partial \mathbb{L}}{\partial v^i_A}-
\lambda_{\alpha}\dfrac{\partial \Psi^{\alpha}}{\partial v^i_A}\right){\rm d} v^i_A+((Z_A)^i-v^i_A) {\rm d} p_i^A .
\end{eqnarray*}
Note that from~\eqref{eq:ImplkOCP} we have $(Z_A)^A_t=1$ and $(Z_A)^B_t=0$ for $A\neq B$. Moreover,
\begin{eqnarray}
 \sum_{A=1}^k(Y_A)^A_i&=& \dfrac{\partial \mathbb{L}}{\partial q^i}-\lambda_\alpha \dfrac{\partial \Psi^\alpha}{\partial q^i}, \label{eq:SROptControl1}\\ [2mm]
 (Z_A)^i&=&  v^i_A, \label{eq:SROptControl2} \\  [2mm]
p^A_i&=& \dfrac{\partial \mathbb{L}}{\partial v^i_A} -\lambda_\alpha \dfrac{\partial \Psi^\alpha}{\partial v^i_A}, , \label{eq:SROptControl3}\\[2mm]
 0&=& \dfrac{\partial \mathbb{L}}{\partial u^a}-\lambda_\alpha \dfrac{\partial \Psi^\alpha}{\partial u^a}. , \label{eq:SROptControl4}
\end{eqnarray}
together with the tangency conditions
 \begin{equation}
0=Z_A(\Psi^{\alpha})\vert_{{\mathcal W}^{M_C}}=
\left((Z_A)^A_t\frac{\partial \Psi^{\alpha}}{\partial t^A}+
(Z_A)^i\frac{\partial \Psi^{\alpha} }{\partial q^i}+
(Z_A)^a\frac{\partial \Psi^{\alpha} }{\partial u^a}+
(Z_A)^i_B\frac{\partial \Psi^{\alpha}}{\partial v^i_B}\right)\Big\vert_{{\mathcal W}^{M_C}} \label{eq:TangCondSRControl-1}
 \end{equation}
for every $A=1,\dots, k$. Imposing the conditions~\eqref{eq:0Condition1},~\eqref{eq:0Condition2}, we know that $(Y_A)^B_i=0$ for $A\neq B$. From here we can start a constraint algorithm in the sense of~\cite{1978Gotay} as follows: the  tangency conditions with respect to the constraints~\eqref{eq:SROptControl3} and~\eqref{eq:SROptControl4}  obtained from equation~\eqref{Eq:iZAW0MC} give the following equations on $\mathcal{W}^{M_C}$:
\begin{eqnarray*}
0&=&Z_A\left(p^B_i-\dfrac{\partial \mathbb{L}}{\partial v^i_B}+\lambda_\alpha \dfrac{\partial \psi^\alpha}{\partial v^i_B}\right)\\
&=& (Y_A)^A_i\delta_A^B-\dfrac{\partial^2 \mathbb{L}}{\partial t^A \partial v^i_B} -\dfrac{\partial^2 \mathbb{L}}{\partial q^j \partial v^i_B} v^j_A-  \dfrac{\partial^2 \mathbb{L}}{\partial v^j_C \partial v^i_B} (Z_A)^j_C- \dfrac{\partial^2 \mathbb{L}}{\partial u^a \partial v^i_B} (Z_A)^a\\&&+\lambda_\alpha \dfrac{\partial^2 \psi^\alpha}{\partial t^A \partial v^i_B} +\lambda_\alpha \dfrac{\partial^2 \psi^\alpha}{\partial q^j \partial v^i_B} v^j_A+ \lambda_\alpha \dfrac{\partial^2 \psi^\alpha}{\partial v^j_C \partial v^i_B} (Z_A)^j_C+ \lambda_\alpha \dfrac{\partial^2 \psi^\alpha}{\partial u^a \partial v^i_B} (Z_A)^a;\\ [3mm]
0&=&Z_A\left( \dfrac{\partial \mathbb{L}}{\partial u^a}-\lambda_\alpha \dfrac{\partial \Psi^\alpha}{\partial u^a}\right)\\
&=&  \dfrac{\partial^2 \mathbb{L}}{\partial t^A \partial u^a}+ \dfrac{\partial^2 \mathbb{L}}{\partial u^b \partial u^a}(Z_A)^b+  \dfrac{\partial^2 \mathbb{L}}{\partial q^i\partial u^a}v^i_A+ \dfrac{\partial^2 \mathbb{L}}{\partial v^i_B\partial u^a}(Z_A)^i_B-\lambda_\alpha \dfrac{\partial^2 \Psi^\alpha}{\partial t^A \partial u^a}-\lambda_\alpha \dfrac{\partial^2 \Psi^\alpha}{\partial q^i \partial u^a} v^i_A\\ && -\lambda_\alpha \dfrac{\partial^2 \Psi^\alpha}{\partial v^i_B \partial u^a} (Z_A)^i_B-\lambda_\alpha \dfrac{\partial^2 \Psi^\alpha}{\partial u^b \partial u^a} (Z_A)^b.
\end{eqnarray*}
If the square matrix of size $k+nk$
\begin{equation*}
\begin{pmatrix}
 \dfrac{\partial^2 \mathbb{L} }{\partial u^b \partial u^a}-\lambda_\alpha \dfrac{\partial^2 \Psi^\alpha}{\partial u^b \partial u^a} &  \dfrac{\partial^2 \mathbb{L} }{\partial v^i_B \partial u^a}-\lambda_\alpha \dfrac{\partial^2 \Psi^\alpha}{\partial v^i_B \partial u^a} \\[3mm] - \dfrac{\partial^2 \mathbb{L} }{\partial v^j_C \partial u^b}+ \lambda_\alpha \dfrac{\partial^2 \Psi^\alpha}{\partial u^b \partial v^j_C} & - \dfrac{\partial^2 \mathbb{L} }{\partial v^i_B \partial v^j_C}+\lambda_\alpha \dfrac{\partial^2 \Psi^\alpha}{\partial v^i_B \partial v^j_C}
\end{pmatrix}
\end{equation*}
has maximum rank, $(Z_A)^b$ and $(Z_A)^i_B$ are determined in terms of $(Y_A)^A_i$, which must satisfy the condition~\eqref{eq:SROptControl1}
 coming from~\eqref{Eq:iZAW0MC}. The algorithm continues until stabilization.

\section{Example: Orientation of a bipolar molecule in the plane by means of two external fields}\label{Sec:example}

Let us consider now the control partial differential equation studied in \cite[Section 8]{Marco}:
\begin{equation}
{\rm i} \dfrac{\partial \Psi(t,\theta)}{\partial t}=\left( -\dfrac{\partial^2 \Psi(t,\theta)}{\partial \theta^2}+u_1(t) \cos \theta \, \Psi(t,\theta)+u_2(t) \sin \theta \, \Psi(t,\theta)\right),
\label{eq:Control}
\end{equation}
where $\Psi$ is an element in a Hilbert space taking values on the complex and $u_1$, $u_2$ take values in $\mathbb{R}$.

Let us rewrite the problem according to Section~\ref{Sec:UnifControlImplicitEDPS}. This equation fits in 2-symplectic formalism where $t^1=t$ and $t^2=\theta$. Note that~\eqref{eq:Control} is a partial differential equation on the complex numbers. Hence let us rename
\begin{equation*}
q^1={\rm Re} \Psi, \quad q^2={\rm Im} \Psi.
\end{equation*}
In order to rewrite~\eqref{eq:Control} as an implicit partial differential equation we work on a 6-dimensional manifold $Q$ with local coordinates
\begin{equation*}
\left(q^1,q^2,q^3=\dfrac{\partial q^1}{\partial t}, q^4=\dfrac{\partial q^2}{\partial t},q^5=\dfrac{\partial q^1}{\partial \theta}, q^6=\dfrac{\partial q^2}{\partial \theta}\right)
\end{equation*}
to transform the second order partial differential equation into first order partial differential equations.

The local coordinates for $C\times_Q T^1_2Q$ are
 $(t^1,t^2,u^1,u^2,q^i,v_1^i,v_2^i)$. Note that apart from~\eqref{eq:Control} we also know that
\begin{equation}
v_1=\left( \dfrac{\partial q^1}{\partial t},\dfrac{\partial q^2}{\partial t},\dfrac{\partial^2 q^1}{\partial t \partial t},\dfrac{\partial^2 q^2}{\partial t \partial t},\dfrac{\partial^2 q^1}{\partial t \partial \theta },\dfrac{\partial^2 q^2}{\partial t \partial \theta}\right),\quad
v_2=\left( \dfrac{\partial q^1}{\partial \theta},\dfrac{\partial q^2}{\partial \theta},\dfrac{\partial^2 q^1}{\partial \theta \partial t},\dfrac{\partial^2 q^2}{\partial \theta \partial t},\dfrac{\partial^2 q^1}{\partial \theta \partial \theta },\dfrac{\partial^2 q^2}{\partial \theta \partial \theta}\right).\label{Eq:v1v2}
\end{equation}
Hence~\eqref{Eq:v1v2} determines some relationships between some coordinates of $v_1$ and $v_2$. Equations~\eqref{eq:Control} and~\eqref{Eq:v1v2}, determine a submanifold $M_C$ of $C\times_Q T^1_2Q$ implicitly defined by the following constraints:
\begin{equation*}
\begin{array}{rclrclrcl}
\Psi^1&=& v^1_1-q^3, \quad &\Psi^4&=&v^2_2-q^6, \quad & \Psi^7&=&-q^3-v^6_2+u_1 q^2 \cos \theta +u_2 q^2 \sin \theta\, , \\

\Psi^2&=&v^2_1-q^4, \quad & \Psi^5&=&v^3_2-v^5_1, \quad & \Psi^8&=&q^4-v^5_2+u_1 q^1 \cos \theta +u_2 q^1 \sin \theta \, . \\
\Psi^3&=&v^1_2-q^5, \quad & \Psi^6&=&v^4_2-v^6_1, \quad &&&
\end{array}
\end{equation*}
A general $2$-vector field $\mathbf{Z}$ on $C\times_{\mathbb{R}^2\times Q}\left(\mathbb{R}^2\times \left(T^1_2Q \oplus (T^1_2)^*Q\right)\right)$ is locally given by
\begin{equation*}
Z_A=(C_A)^1\dfrac{\partial}{\partial t}+(C_A)^2\dfrac{\partial}{\partial \theta}+(D_A)_a\dfrac{\partial}{\partial u_a}+(E_A)^i\dfrac{\partial}{\partial q^i}+(F_A)^i_B\dfrac{\partial}{\partial v^i_B}+(G_A)^B_i\dfrac{\partial}{\partial p^B_i}.
\end{equation*}
Assume that the cost function is control-quadratic in the following way $\mathbb{L}=\dfrac{1}{2}(u_1^2+u_2^2)$. From~\eqref{eq:ImplkOCP},~\eqref{Eq:iZAW0MC} we have

\begin{equation*}
\begin{array}{lcl}
{\rm i}_{Z_A}{\rm d}t^B=\delta_A^B & \longrightarrow &  (C_1)^1=1, \quad (C_1)^2=0, \quad (C_2)^1=0, \quad (C_2)^2=1,\\[3mm]
{\rm d}p^A_{i} & \longrightarrow & (E_A)^i= v^i_A,\\[3mm]
{\rm d}q^1 & \longrightarrow &
(G_1)^1_1+(G_2)^2_1=-\lambda_8(u_1\cos \theta +u_2 \sin \theta),\\[3mm]
 {\rm d}q^2 & \longrightarrow &
(G_1)^1_2+(G_2)^2_2=-\lambda_7(u_1\cos \theta +u_2 \sin \theta), \\[3mm]
 {\rm d}q^3 & \longrightarrow &
(G_1)^1_3+(G_2)^2_3=\lambda_1+\lambda_7, \\[3mm]
 {\rm d}q^4 & \longrightarrow &
(G_1)^1_4+(G_2)^2_4=\lambda_2-\lambda_8, \\[3mm]
 {\rm d}q^5 & \longrightarrow &
(G_1)^1_5+(G_2)^2_5=\lambda_3, \\[3mm]
 {\rm d}q^6 & \longrightarrow &
(G_1)^1_6+(G_2)^2_6=\lambda_4, \\[3mm]
{\rm d}v^i_A & \longrightarrow & \lambda_1+ p^1_1=0, \quad \lambda_2+ p^1_2=0, \quad \lambda_3+ p^2_1=0,
 \quad \lambda_4+\ p^2_2=0, \quad \lambda_5+ p^2_3=0,  \\[3mm] && -\lambda_5+ p^1_5=0, \quad  \lambda_6+ p^2_4=0,
 \quad -\lambda_6+ p^1_6=0, \quad -\lambda_7+  p^2_6=0, \quad -\lambda_8+ p^2_5=0,
\\[3mm] &&  p^1_3=0, \quad p^1_4=0,
 \\[3mm]
{\rm d}u_1 & \longrightarrow & \lambda_7 q^2 \cos \theta +\lambda_8 q^1 \cos \theta  - u_1=0, \\ [3mm]
{\rm d}u_2 & \longrightarrow & \lambda_7 q^2 \sin \theta +\lambda_8 q^1 \sin \theta  - u_2=0.
\end{array}
\end{equation*}
Hence all the controls and Lagrange multipliers are determined:
\begin{equation*}
\begin{array}{l}
\lambda_1=-p_1^1, \quad \lambda_2=-p_2^1, \quad \lambda_3=-p_1^2, \quad \lambda_4=-p_2^2, \quad
\lambda_5=-p_3^2=p^1_5, \quad \lambda_6=-p^2_4=p^1_6, \quad \lambda_7=p^2_6,   \\[3mm]
 \lambda_8=p^2_5,  \quad u_1=p^2_6q^2\cos \theta +p^2_5 q^1 \cos \theta, \quad u_2=p^2_6q^2\sin \theta +p^2_5 q^1 \sin \theta.
\end{array}
\end{equation*}
The cost function can be written as follows:
\begin{equation*}
\mathbb{L}=\dfrac{1}{2}(u_1^2+u_2^2)=(p^2_6q^2+p^2_5q^1)^2.
\end{equation*}
Since $p^1_3=0$ and $p^1_4=0$, we have $Z_A(p^1_3)=Z_A(p^1_4)=0$ for $A=1,2$. Then, $(G_1)^1_3=(G_2)^1_3=(G_2)^1_4=(G_1)^1_4=0$. Having this in mind, we have
\begin{equation}
\begin{array}{rcl}
 (G_1)^1_1&=&-(G_2)^2_1-p^2_5(p^2_6 q^2+p^2_5 q^1), \\ [3mm]
(G_1)^1_2&=&-(G_2)^2_2-p^2_6(p^2_6 q^2+ p^2_5 q^1), \\ [3mm]
(G_2)^2_3&=&-p^1_1+p^2_6, \\ [3mm] (G_2)^2_4&=&-p^1_2-p^2_5, \\ [3mm]
(G_1)^1_5&=&-(G_2)^2_5-p^2_1, \\ [3mm] (G_1)^1_6&=&-(G_2)^2_6-p^2_2.
\end{array}\label{eq:example1}
\end{equation}
Note that the controls satisfy the following relationship $u_1 \sin \theta= u_2 \cos \theta$. If we impose the tangency condition, we have
\begin{eqnarray*}
Z_1(u_1 \sin \theta-u_2 \cos \theta)&=& (D_1)_1 \sin \theta - (D_1)_2 \cos \theta=0,\\
Z_2(u_1 \sin \theta-u_2 \cos \theta)&=& u_1\cos \theta +u_2\sin \theta+(D_2)_1 \sin \theta - (D_2)_2 \cos \theta=0.
\end{eqnarray*}
Thus, $(D_1)_1=\cos \theta$, $(D_1)_2=\sin \theta$ and $(D_2)_1 \sin \theta - (D_2)_2 \cos \theta=-p^2_6q^2-p^2_5q^1$.

By conditions~\eqref{eq:0Condition1},~\eqref{eq:0Condition2} we have that $(G_A)^B_i=0$ for $A\neq B$.

After imposing the tangency conditions in~\eqref{eq:TangCondSRControl-1} we obtain
\begin{equation*}
\begin{array}{ll}
Z_A(\Psi^1)=(F_A)^1_1-v^3_A=0, &\quad Z_A(\Psi^2)=(F_A)^2_1-v^4_A=0,\\[3mm]
Z_A(\Psi^3)=(F_A)^1_2-v^5_A=0, &\quad Z_A(\Psi^4)=(F_A)^2_2-v^6_A=0,\\[3mm]
Z_A(\Psi^5)=(F_A)^3_2-(F_A)^5_1=0, &\quad Z_A(\Psi^6)=(F_A)^4_2-(F_A)^6_1=0,
\end{array}
\end{equation*}
\begin{equation*}
\begin{array}{l}
Z_A(\Psi^7)=-v^3_A-(F_A)^6_2+(D_A)_1 q^2\cos \theta -\delta^A_2 u_1 q^2\sin \theta+v^2_A u_1 \cos \theta +
(D_A)_2 q^2\sin \theta +\delta^A_2 u_2 q^2\cos \theta \\ [3mm]
\quad +v^2_A u_2 \sin \theta=0, \\[3mm]
Z_A(\Psi^8)=v^4_A-(F_A)^5_2+(D_A)_1 q^1\cos \theta -\delta^A_2 u_1 q^1\sin \theta+v^1_A u_1 \cos \theta +
(D_A)_2 q^1\sin \theta +\delta^A_2 u_2 q^1\cos \theta \\ [3mm]
\quad +v^1_A u_2 \sin \theta=0,
\end{array}
\end{equation*}
Thus,
\begin{small}
\begin{eqnarray*}
Z_A&=&\dfrac{\partial}{\partial t^A}+v_A^i\dfrac{\partial}{\partial q^i}+(D_A)_a\dfrac{\partial}{\partial u_a}+v^3_A \dfrac{\partial}{\partial v^1_1}+v^4_A \dfrac{\partial}{\partial v^2_1}+v^5_A \dfrac{\partial}{\partial v^1_2}+v^6_A \dfrac{\partial}{\partial v^2_2}\\&+&(F_A)^5_1 \left(\dfrac{\partial}{\partial v^3_2}+\dfrac{\partial}{\partial v^5_1}\right)+(F_A)^6_1 \left(\dfrac{\partial}{\partial v^4_2}+\dfrac{\partial}{\partial v^6_1}\right)\\&+&\left(v^4_A+(D_A)_1 q^1\cos \theta -\delta^A_2 u_1 q^1\sin \theta+v^1_A u_1 \cos \theta +
(D_A)_2 q^1\sin \theta +\delta^A_2 u_2 q^1\cos \theta
 +v^1_A u_2 \sin \theta\right)\dfrac{\partial}{\partial v^5_2} \\&+&\left( -v^3_A+(D_A)_1 q^2\cos \theta -\delta^A_2 u_1 q^2\sin \theta+v^2_A u_1 \cos \theta +
(D_A)_2 q^2\sin \theta +\delta^A_2 u_2 q^2\cos \theta
+v^2_A u_2 \sin \theta\right) \dfrac{\partial}{\partial v^6_2}\\&+&(F_A)^3_1\dfrac{\partial}{\partial v^3_1}+(F_A)^4_1\dfrac{\partial}{\partial v^4_1}+(G_A)^A_i\dfrac{\partial}{\partial p^A_i},
\end{eqnarray*}
\end{small}
where $t^1=t$, $t^2=\theta$, $(D_1)_1=\cos \theta$, $(D_1)_2=\sin \theta$ and $(D_2)_1 \sin \theta - (D_2)_2 \cos \theta=-p^2_6q^2-p^2_5q^1$ and also equations~\eqref{eq:example1} are satisfied. 
The optimal sections are integral sections of $\mathbf{Z}=(Z_1,\dots,Z_k)$.

\section{Future work}

After this first geometric approach to  optimal control problems governed by partial differential equations, it remains open to find the way to successfully extend any control system regardless of the nature of the cost function. The main difficulty is to obtain a compatible system of partial differential equations after extending the original control system. 

In this paper we have not mentioned the different kind of extremals for optimal control problems. There exist the so-called abnormal extremals which are characterized at first without considering the cost function. As shown in~\cite{2009BMAlgorithm}, the constraint algorithm in the sense of Gotay-Nester-Hinds is useful to characterize the different kind of extremals in optimal control theory. Now, that the optimal control problems  governed by partial differential equations have been understood in the $k$-symplectic framework, it seems that the application of the constraint algorithm for $k$-presymplectic Hamiltonian systems~\cite{2009XavierEtAl} will characterize the extremals of those problems.

\section*{Acknowledgements}

This work has been partially supported by MICINN (Spain)
Grants MTM2008-00689, MTM2009-08166, MTM2010-12116-E, MTM 2010-21186-C02-02; 2009SGR1338 of
the Catalan government, IRSES project GEOMECH (246981) within the 7th European Community Framework Program. MBL has been financially supported by Juan de la Cierva fellowship from MICINN.


\begin{thebibliography}{99}

\bibitem{Awane} A. Awane, k-symplectic structures, \textit{J. Math. Phys.} \textbf{33} (1992) 4046-4052.

\bibitem{2007SRNuestro} M. Barbero Li\~n\'an, A. Echeverr\'ia-Enr\'iquez, D. Mart\'in de Diego, M.C. Mu\~noz Lecanda, N. Rom\'an Roy. Skinner-Rusk unified formalism for optimal control systems and applications. \textit{J. Phys. A } \textbf{40}  (2007), no. 40, 12071–12093.

\bibitem{2009BM} M. Barbero Li\~n\'an, M.C. Mu\~noz Lecanda. Geometric approach to Pontryagin's Maximum Principle,
Acta Appl. Math., Vol. 108(2) (2009) 429-485.

\bibitem{2009BMAlgorithm} M. Barbero Li\~n\'an, M.C. Mu\~noz Lecanda. Constraint Algorithm for Extremals in Optimal Control Problems, \textit{Int. J. Geom. Methods Mod. Phys.}, \textbf{6} (7) (2009), 1221-1233.

\bibitem{Boltyanski} V. Boltyanski, H. Martini, V.
Soltan, \textit{Geometric Methods and Optimization
Problems}, Kluwer Academic Publishers, Dordrecht 1999. 

\bibitem{Marco} U. Boscain, M. Caponigro, T. Chambrion, M. Sigalotti. A weak spectral condition for the controllability of the bilinear Schr\"odinger equation with application to the control of a rotating planar molecule. \textit{Comm. Math. Phys.}, 311 (2012), pp. 423-455. 


\bibitem{1978Gotay}{\sc M. J. Gotay, J. M. Nester, G. Hinds}, Presymplectic
manifolds and the Dirac-Bergmann theory of constraints,
{\sl J. Math. Phys.}, \textbf{19}(11)(1978), 2388-2399.


\bibitem{2009XavierEtAl} X. Gr\`acia, R. Mart\'in, N. Rom\'an-Roy. Constraint algorithm for k-presymplectic Hamiltonian systems. Application to singular field theories, \textit{Int. J. Geom. Meth. Mod. Phys.}, \textbf{6} (5) (2009) 851-872. 

\bibitem{Grasse} K. Grasse. On the relation between small-time local controllability and normal selfreachability. \textit{Math. Control Signals Systems} \textbf{5} (1992), 41–66.

\bibitem{BookOCPPDESConference}  K.-H. Hoffmann, G. Leugering and F. Tr\" oltzsch. \textit{Optimal control of partial differential equations.} Papers from the International Conference held in Chemnitz, April 20--25, 1998. International Series of Numerical Mathematics. Volume 133. Birkhauser Verlag, Basel, 1999.


\bibitem{1997Jurdjevic} V. Jurdjevic, \textit{Geometric Control
Theory}, Cambridge Studies in Advanced Ma\-the\-matics 51,
Cambridge University Press, New York 1997.

\bibitem{deLeon1} M. de Le\'on, E. Merino, J.A. Oubi\~na, P. R. Rodrigues, M. Salgado,
Hamiltonian systems on $k$-cosymplectic manifolds, \textit{J. Math. Phys.}  \textbf{39} (2) (1998),
876--893.

\bibitem{deLeon2} M. de Le\'on, E. Merino, M. Salgado, $k$-cosymplectic manifolds and Lagrangian field theories. \textit{J. Math. Phys.}  \textbf{42} (5) (2001), 2092--2104.


\bibitem{39McShane} E. J. McShane, On Multipliers for
Lagrange Problems, \textit{Am. J. Math.} \textbf{61}
(1939), 809\textendash{}19.

\bibitem{MunozSilvia} M. C. Mu\~noz-Lecanda, M. Salgado, S. Vilari\~no, $k$-symplectic and $k$-cosymplectic Lagrangian field theories: some interesting examples and applications. \textit{Int. J. Geom. Methods Mod. Phys.} \textbf{7}(4)(2010), 669--692.

\bibitem{P62} L. S. Pontryagin, V. G. Boltyanski, R. V. Gamkrelidze, E. F. Mischenko: The Mathematical Theory of
Optimal Processes. Interscience, New York (1962)

\bibitem{2012BcnSantiago} A.  M. Rey,  N. Rom\'an-Roy,
M. Salgado, S. Vilari\~no. k-Cosymplectic Classical Field Theories: Tulczyjew
and Skinner–Rusk Formulations. \textit{Math. Phys. Anal. Geom.} \textbf{15}(2012), no. 2. 

%
   \bibitem{BookOCPPDES1} I. V. Sergienko, V. S. Deineka. \textit{Optimal control of distributed systems with conjugation conditions}. Nonconvex Optimization and its Applications. Volume 75, Kluwer Academic Publishers, Dordrecht, 2005.
      
\bibitem{S98Free} H. J. Sussmann, An introduction to the
coordinate\textendash{}free maximum principle,  \textit{
Geometry of Feedback and Optimal Control},
463\textendash{}\textendash{}557, Monogr. Textbooks Pure
Appl. Math. 207, Marcel Dekker, New York, 1998.

\bibitem{BookOCPPDES2} F. Tr\"oltzsch. \textit{Optimal control of partial differential equations. Theory, methods and applications}. Translated from the 2005 German original by J. Sprekels. Graduate Studies in Mathematics. Volume 112. American Mathematical Society, Providence, RI, 2010.

\end{thebibliography}
\end{document}